\documentclass[12pt,a4paper]{amsart}

\usepackage[colorinlistoftodos]{todonotes}
\usepackage{breqn}
\usepackage{url} 

\usepackage{amsmath,amssymb,amsfonts,amsthm,amscd,textcomp,times,booktabs}
\allowdisplaybreaks
\usepackage{eclbkbox}
  \ifx\pdfoutput\undefined
   \fi
  \usepackage{braket}
  \usepackage{color,float}
   \usepackage[normalem]{ulem}
   \usepackage{tabularx}

  \setlength{\textwidth}{470pt}

\bibliographystyle{plain}
\newtheorem{theorem}{Theorem}
\newtheorem{conjecture}[theorem]{Conjecture}

\newtheorem{proposition}[theorem]{Proposition}
\newtheorem{lemma}[theorem]{Lemma}
\newtheorem{definition}[theorem]{Definition}
\newtheorem{corollary}[theorem]{Corollary}
\newtheorem{remark}[theorem]{Remark}
\newtheorem{example}[theorem]{Example}
\newtheorem{problem}[theorem]{Problem}
\newtheorem{question}[theorem]{Question}

\newtheorem{step}{Step}
\setlength{\oddsidemargin}{0.05in}
\setlength{\evensidemargin}{0.05in} \setlength{\textwidth}{6.0in}
\setlength{\topmargin}{-.5in} \setlength{\textheight}{9.4in}
\numberwithin{equation}{section} \numberwithin{theorem}{section}

\usepackage[all]{xy}


\newcommand{\bs}{\boldsymbol}

\newcommand{\mapright}[1]{\smash{\mathop{   \hbox to 0.7cm{\rightarrowfill}}
 \limits^{#1}}}

\newcommand{\N}{\ensuremath{\mathbb{N}}}
\newcommand{\Z}{\ensuremath{\mathbb{Z}}}

\newcommand{\R}{\ensuremath{\mathbb{R}}}
\newcommand{\C}{\ensuremath{\mathbb{C}}}

\newcommand{\vol}{\ensuremath{\mathrm{vol}}}
\newcommand{\Vol}{\ensuremath{\mathrm{Vol}}}
\newcommand{\bracket}[1]{\ensuremath{\langle #1 \rangle}}

\DeclareMathOperator{\Hom}{Hom} 
 
\DeclareMathOperator{\NE}{NE}

\newcommand{\conv}{{\rm conv}}

\def\C{\mathbb C}

\def\P{\mathbb P}


\title{Toric Fano manifolds that do not admit \linebreak  extremal K\"ahler metrics}

\author{DongSeon Hwang}
\address{Center for Complex Geometry, Institute for Basic Science (IBS), Daejeon $34126$, Republic of Korea}
\email{dshwang@ibs.re.kr}

\author{Hiroshi Sato}
\address{Department of Applied Mathematics, Faculty of Sciences, Fukuoka University, $8$-$19$-$1$,
Nanakuma, Jonan-ku, Fukuoka $814$-$0180$, Japan}
\email{hirosato@fukuoka-u.ac.jp}

\author{Naoto Yotsutani}
\address{Kagawa University, Faculty of education, Mathematics, Saiwaicho $1$-$1$, Takamatsu, Kagawa, $760$-$8522$, Japan}
\email{yotsutani.naoto@kagawa-u.ac.jp}

\makeatletter
\@namedef{subjclassname@2020}{%
\textup{2020} Mathematics Subject Classification}
\makeatother

\subjclass[2020]{Primary: 53C55, Secondary: 14L24, 14M25}
\keywords{Fano variety, Extremal metrics, relative K-stability, toric variety}

\date{\today}

\begin{document}

\maketitle

\noindent{\bfseries Abstract.}
We show that there exists a toric Fano manifold of dimension $10$ that does not admit an extremal K\"ahler metric in the first Chern class, answering a question of Mabuchi. By taking a product with a suitable toric Fano manifold, one can also produce a toric Fano manifold of dimension $n$ admitting no extremal K\"ahler metric in the first Chern class for each $n \geq 11$.

\section{Introduction}
One of the central problems in K\"ahler geometry is to find a canonical K\"ahler metric on a given compact K\"ahler manifold. The most well-known and important metric of this kind is a K\"ahler--Einstein metric. The famous Calabi problem asks the existence of K\"ahler--Einstein metrics of a given compact K\"ahler manifold. It turned out that Calabi--Yau manifolds and canonically polarized manifolds always admit K\"ahler--Einstein metrics  by Aubin and Yau (\cite{Au} and \cite{Yau}). However, not every Fano manifold admits a K\"ahler--Einstein metric. The first obstruction was given by Matsushima (\cite{Mat}).   Thanks to the solution of the Yau--Tian--Donaldson conjecture for K\"ahler--Einstein metric case, the existence of the K\"ahler--Einstein metric can be described in terms of a stability condition in algebraic geometry (\cite{CDS15a},\cite{CDS15b}, \cite{CDS15c} and \cite{Ti15}). More precisely, an anti-canonically polarized Fano manifold $(X,-K_X)$ admits a K\"ahler--Einstein metric in the first Chern class if and only if $(X,-K_X)$ is K-polystable. However, verifying K-polystability for individual Fano manifolds remains a challenging problem and continues to be an active area of research.

Now it is natural to seek a suitable metric for a Fano manifold that does not admit a K\"ahler--Einstein metric. In the literature,    K\"ahler--Ricci solitons, Mabuchi solitons, and extremal K\"ahler metrics  are among the most well-known candidates for serving as canonical K\"ahler metrics.  Let $g$ be a K\"ahler metric on a Fano manifold $X$ of dimension $n$. Denote by $\omega_g$ the K\"ahler form of the K\"ahler metric $g$ representing $c_1(X)$. By the $\partial \bar{\partial}$-lemma, there is a unique $F_g \in C^\infty(X, \mathbb{R})$, called {\em the Ricci potential of $g$}, satisfying 
$$\mathrm{Ric}(\omega_g) - \omega_g = \frac{\sqrt{-1}}{2\pi}\partial\bar{\partial}F_g 
\qquad \text{and} \qquad \int_X (1-e^{F_g})\omega^n_g = 0.$$
Then the K\"ahler metric $g$ is called a \emph{K\"ahler--Ricci soliton} if the gradient vector field $\mathrm{grad}^{\mathbb{C}}_{\omega_g} F_g$ is a holomorphic vector field on $X$. It is called a \emph{Mabuchi soliton} or a \emph{generalized K\"ahler--Einstein metric} if $\mathrm{grad}^{\mathbb{C}}_{\omega_g} (1-e^{F_g})$ is a holomorphic vector field on $X$. It is called an \emph{extremal K\"ahler metric} if $\mathrm{grad}^{\mathbb{C}}_{\omega_g} (s(\omega_g)-n)$ is a holomorphic vector field on $X$ where $s(\omega_g)$ denotes the scalar curvature of $g$. The last notion is defined by Calabi with an equivalent condition in \cite{Ca82}.

A lot of work has been devoted to the YTD type conjectures for those canonical K\"ahler metrics. A  Fano variety admits a K\"ahler--Ricci soliton if and only if it is reduced uniformly Ding stable (\cite[Theorem 1.3]{BLXZ}). Combining the results in \cite{HL}, \cite{Yao22b}, and \cite{Hi}, an anti-canonically polarized Fano manifold admits a Mabuchi soliton if and only if it is uniformly relatively  Ding stable. 
The toric case was treated earlier in \cite{Yao22a}. 
On the other hand, the YTD conjecture for the extremal K\"ahler metric case is still open. 
\begin{conjecture}\cite[Conjecture 1.1]{Sz}
    A polarized manifold admits an extremal K\"ahler metric in the class of the polarization if and only if it is K-stable relative to a maximal torus of automorphisms, \emph{relatively K-polystable} in short.
\end{conjecture}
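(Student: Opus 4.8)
The plan is to separate the two implications, which have very different status, and to note that only the first is needed for the negative results of this paper.

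\textbf{Necessity (an extremal metric forces relative K-polystability).} Assume $(X,L)$ admits an extremal K\"ahler metric $\omega\in c_1(L)$, and let $T$ be a maximal torus in $\Aut(X,L)$ whose Lie algebra contains the extremal vector field $V=\mathrm{grad}^{\mathbb{C}}_{\omega}(s(\omega)-\bar s)$. Given a $T$-equivariant test configuration $(\mathcal X,\mathcal L)$, one wants $\mathrm{DF}^{T}(\mathcal X,\mathcal L)\ge 0$ with equality only for product configurations. I would deduce this from the behaviour of the modified (relative) Mabuchi functional $\mathcal M^{T}$: (i) $\mathcal M^{T}$ is convex and lower semicontinuous along finite-energy geodesic rays (Berman--Berndtsson, Chen--Li--P\u aun, Darvas--Lu); (ii) the existence of the extremal metric makes $\mathcal M^{T}$ attain its minimum, hence bounded below; (iii) a slope formula of Phong--Ross--Sturm / Hisamoto type identifies the asymptotic slope of $\mathcal M^{T}$ along the geodesic ray attached to $(\mathcal X,\mathcal L)$ with $\mathrm{DF}^{T}(\mathcal X,\mathcal L)$ plus a nonnegative ($L^{2}$-type) norm term. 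Chaining (i)--(iii) gives $\mathrm{DF}^{T}\ge 0$, and the rigidity in (i)--(ii) handles the equality case. The only delicate point is the bookkeeping of the ``relative'' functionals, i.e.\ the projection onto the subspace of $\mathfrak{t}$ orthogonal to $V$ for the Futaki--Mabuchi pairing; this direction should go through on established lines.

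\textbf{Sufficiency (relative K-polystability forces an extremal metric).} This is the genuinely hard, still-open half. The natural approach is a continuity method inside the K\"ahler cone (or a twisted-equation / Calabi-flow deformation) solving Calabi's fourth-order extremal equation while maintaining relative K-polystability along the path. One needs: (a) \emph{openness}, from the implicit function theorem after quotienting by the reduced automorphism group, using Fredholmness and transverse invertibility of the linearized (Lichnerowicz-type) operator modulo $\mathfrak{aut}(X)$; (b) \emph{a priori estimates}, an extremal-metric analogue of the partial $C^{0}$ estimate giving uniform control of the potentials from a lower bound on $\mathcal M^{T}$, presumably via Bergman kernels, hence compactness; (c) \emph{closedness}, where if the path breaks one extracts a limiting object and shows it witnesses a violation of relative K-polystability.

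\textbf{Main obstacle.} The crux is step (c), and it is the same obstruction that keeps the full YTD conjecture open outside the Fano/K\"ahler--Einstein range: a degenerating family of extremal metrics converges a priori only to a possibly non-algebraic, singular object, and there is no extremal-metric counterpart of the Donaldson--Sun regularity theorem to promote it to an honest relative test configuration. The plausible resolution passes through an algebraic reformulation --- a relative, torus-quotiented $\delta$-invariant and a valuative description of the relative non-Archimedean Mabuchi functional, together with a theorem equating its positivity with coercivity of $\mathcal M^{T}$ --- an extremal analogue of the Fujita--Odaka--Blum--Jonsson--Li--Xu--Zhuang circle of ideas; it is moreover likely that the statement must be \emph{corrected}, with relative K-polystability replaced by a uniform or filtration-tested version, exactly as for cscK metrics. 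In the toric setting all of this is bypassed by the combinatorial reformulation of relative Donaldson--Futaki invariants over rational-polytope degenerations due to Donaldson and Sz\'ekelyhidi, and it is precisely the necessity direction above, made effective in that combinatorial form, that the present paper uses: exhibit a toric Fano tenfold together with an explicit destabilising degeneration of negative relative Donaldson--Futaki invariant, so that no extremal K\"ahler metric can exist in $c_{1}(X)$.
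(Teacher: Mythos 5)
There is a basic mismatch here: the statement you are asked about is a \emph{conjecture} (Sz\'ekelyhidi's Conjecture 1.1), and the paper does not prove it --- it explicitly records that the YTD-type conjecture for extremal metrics is still open, and only cites the one known implication (existence of an extremal metric $\Rightarrow$ relative K-polystability, Stoppa--Sz\'ekelyhidi, quoted as Theorem \ref{eK=>relK}). Your proposal, by your own admission, does not prove it either: the sufficiency half is presented as a program (continuity method, hypothetical extremal analogue of the partial $C^{0}$ estimate and of Donaldson--Sun regularity, possible replacement of relative K-polystability by a uniform version), and a program is not a proof. So the honest verdict is that no complete argument is given, nor could one be expected; what you should have said, and essentially do say, is that only the necessity direction is a theorem and that this is the direction the paper actually invokes.

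Concerning the necessity direction, note also that your sketch is not the argument behind the cited theorem and has its own gaps as written. Stoppa--Sz\'ekelyhidi prove relative K-polystability of extremal manifolds by a blow-up/perturbation argument (building on Arezzo--Pacard--Singer and Stoppa's method), not by the chain (i)--(iii) you describe. The variational route you outline is plausible in spirit but delicate at exactly the points you gloss over: the slope of the (relative) Mabuchi functional along the geodesic ray attached to a test configuration is in general the non-Archimedean Mabuchi functional (or a minimum-norm-type quantity), not the Donaldson--Futaki invariant plus a nonnegative norm, and the equality/rigidity step --- upgrading semistability to polystability, i.e.\ showing vanishing slope forces a product configuration --- requires substantially more than convexity and boundedness below. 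Finally, your description of how the present paper uses the result is slightly off: the destabilisation is detected not by exhibiting an explicit test configuration directly, but via the instability criterion of Yotsutani--Zhou (Proposition \ref{prop:insta}), whose inequality \eqref{ineq:insta} guarantees the existence of a simple piecewise linear $f$ with $\mathcal L_P(f)<0$; the explicit object produced is the polytope datum, not the degeneration itself.
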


One implication turned out to hold true, while the other implication remains open. 
\begin{theorem}\cite[Theorem 1.4]{SS}\label{eK=>relK}
If a polarized manifold admits an extremal K\"ahler metric then it is relatively K-polystable.
\end{theorem}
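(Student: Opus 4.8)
The plan is to establish relative K-polystability in two stages: first relative K-semistability, obtained from boundedness below of a modified K-energy, and then the polystability refinement, obtained from a blow-up argument of Stoppa type. Fix a maximal torus $T$ of the reduced automorphism group of $(X,L)$, with compact form $T_c$, and let $\mathcal{H}^{T}$ be the space of $T_c$-invariant K\"ahler potentials in $c_1(L)$. The key object is the \emph{modified (relative) Mabuchi functional} $\mathcal{M}_{\chi}$ on $\mathcal{H}^{T}$: the usual K-energy corrected by the term pairing the K\"ahler form with the $T$-moment map, so that its Euler--Lagrange equation on $\mathcal{H}^{T}$ is exactly the extremal equation (scalar curvature equal to its $L^{2}$-projection onto holomorphy potentials for $T$). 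I will use two standard facts about $\mathcal{M}_{\chi}$: it is convex along weak geodesics in $\mathcal{H}^{T}$ (the relative version of the Berman--Berndtsson convexity theorem); and to any $T$-equivariant test configuration $(\mathcal{X},\mathcal{L})$ one can associate a geodesic ray $\{\varphi_{t}\}_{t\ge 0}$ in $\mathcal{H}^{T}$ from a fixed base point, along which the asymptotic slope of $\mathcal{M}_{\chi}$ is at most the relative Donaldson--Futaki invariant $\mathrm{DF}_{T}(\mathcal{X},\mathcal{L})$ (the relative analogue of the Boucksom--Hisamoto--Jonsson slope estimate).

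\emph{Step 1 (relative K-semistability).} An extremal K\"ahler metric $\omega$ in $c_{1}(L)$ has isometry group a maximal compact subgroup of $\mathrm{Aut}_{0}(X)$ and its extremal vector field lies in the Lie algebra of $T$, so $\omega$ may be taken $T_c$-invariant; it is then a critical point of $\mathcal{M}_{\chi}$ on $\mathcal{H}^{T}$, hence by geodesic convexity a global minimizer, so $\mathcal{M}_{\chi}$ is bounded below on $\mathcal{H}^{T}$. For any $T$-equivariant test configuration the convex function $t \mapsto \mathcal{M}_{\chi}(\varphi_{t})$ is therefore bounded below, so its slope at infinity is $\ge 0$; combined with the slope estimate this gives $\mathrm{DF}_{T}(\mathcal{X},\mathcal{L}) \ge 0$. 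Thus $(X,L)$ is relatively K-semistable.

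\emph{Step 2 (upgrading to polystability).} Suppose for contradiction that $\mathrm{DF}_{T}(\mathcal{X},\mathcal{L}) = 0$ for some $T$-equivariant test configuration $(\mathcal{X},\mathcal{L})$ which is not a product. Choose a point $p \in X$ in general position and in a manner compatible with $T_c$ (so that an equivariant gluing can be performed), and let $\pi \colon \widehat{X} = \mathrm{Bl}_{p}X \to X$ be the blow-up with exceptional divisor $E$. By the extremal version of the Arezzo--Pacard theorem due to Arezzo--Pacard--Sz\'ekelyhidi, $\widehat{X}$ admits an extremal K\"ahler metric in $c_{1}(k\,\pi^{*}L - E)$ for all $k \gg 0$, so by Step 1 the pair $(\widehat{X},\, k\,\pi^{*}L - E)$ is relatively K-semistable. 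On the other hand, the proper transform of $(\mathcal{X},\mathcal{L})$ (together with the exceptional locus) yields a $T$-equivariant test configuration $(\widehat{\mathcal{X}},\widehat{\mathcal{L}}_{k})$ for $(\widehat{X},\, k\,\pi^{*}L - E)$, and the relative analogue of Stoppa's blow-up estimate gives
\[
\mathrm{DF}_{T}(\widehat{\mathcal{X}},\widehat{\mathcal{L}}_{k}) \;=\; \mathrm{DF}_{T}(\mathcal{X},\mathcal{L}) \;-\; \varepsilon_{k}, \qquad \varepsilon_{k} \;\geq\; \frac{c}{k^{\,n-1}}\,\big\|(\mathcal{X},\mathcal{L})\big\|_{T}^{2} \;>\; 0 \quad (k \gg 0),
\]
where $\|\cdot\|_{T}$ is the relative norm, which is positive precisely because $(\mathcal{X},\mathcal{L})$ is not a product. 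With $\mathrm{DF}_{T}(\mathcal{X},\mathcal{L}) = 0$ this makes $\mathrm{DF}_{T}(\widehat{\mathcal{X}},\widehat{\mathcal{L}}_{k}) < 0$, contradicting the relative K-semistability of $(\widehat{X},\, k\,\pi^{*}L - E)$. Hence $\mathrm{DF}_{T}(\mathcal{X},\mathcal{L}) = 0$ forces $(\mathcal{X},\mathcal{L})$ to be a product, and $(X,L)$ is relatively K-polystable.

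\emph{Main obstacle.} The heart of the argument is the blow-up estimate in Step 2: one must expand $\mathrm{DF}_{T}(\widehat{\mathcal{X}},\widehat{\mathcal{L}}_{k})$ as $k \to \infty$ and show its deviation from $\mathrm{DF}_{T}(\mathcal{X},\mathcal{L})$ is bounded below by a positive multiple of the relative norm, keeping careful track of how the scalar-curvature integrand, the $T$-moment data defining the relative Futaki invariant, and the change of polarization $k\,\pi^{*}L - E$ interact under the blow-up; the non-product hypothesis is exactly what makes the relevant norm positive. A secondary technical point is carrying out the Arezzo--Pacard--Sz\'ekelyhidi existence theorem on $\widehat{X}$ compatibly with $T_c$ and the extremal vector field (for instance by blowing up a fixed point of a maximal compact subgroup, or by an equivariant version of the gluing construction). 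An alternative route avoiding Arezzo--Pacard would replace Step 2 by a rigidity statement: from $\mathrm{DF}_{T}(\mathcal{X},\mathcal{L}) = 0$ one gets vanishing asymptotic slope of $\mathcal{M}_{\chi}$ along the geodesic ray, and then the strict convexity of $\mathcal{M}_{\chi}$ transverse to the $\mathrm{Aut}_{0}(X)$-action supplied by the extremal metric would force the ray to be generated by a holomorphic vector field, hence $(\mathcal{X},\mathcal{L})$ to be a product; but making this precise for arbitrary test configurations is itself delicate, which is why I would take the blow-up route.
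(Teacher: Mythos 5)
The paper does not actually prove this statement: it is quoted from Stoppa--Sz\'ekelyhidi \cite{SS} and used as a black box. Your outline is essentially a reconstruction of the Stoppa--Sz\'ekelyhidi argument itself: a semistability step coming from a lower bound for the modified (relative) Mabuchi functional (in the original this is taken from Sz\'ekelyhidi's earlier work and Chen--Tian-type bounds, whereas you invoke the later geodesic-convexity/slope-inequality machinery, which is a legitimate modernization), followed by the blow-up trick combining the Arezzo--Pacard--Sz\'ekelyhidi extremal gluing theorem with a perturbation of the relative Donaldson--Futaki invariant. So, as far as this paper is concerned, your proposal follows the same route as the cited source.

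That said, the two assertions carrying Step 2 are not correct as stated, and they are exactly where the real work lies. First, the claimed expansion $\mathrm{DF}_{T}(\widehat{\mathcal X},\widehat{\mathcal L}_{k})=\mathrm{DF}_{T}(\mathcal X,\mathcal L)-\varepsilon_{k}$ with $\varepsilon_{k}\geq c\,k^{-(n-1)}\|(\mathcal X,\mathcal L)\|_{T}^{2}$ is not what the blow-up computation gives: the order-$k^{-(n-1)}$ correction is a Chow-weight-type quantity attached to the chosen point (the weight of the induced $\mathbb{C}^{*}$-action at the specialization of $p$ in the central fibre, suitably corrected by the torus), and it has no a priori sign; the crux is to show that for a non-product equivariant test configuration one can choose an admissible point or invariant orbit making this weight strictly positive. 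No lower bound by the square of the relative norm is available at this order, and writing one down conceals the entire difficulty. Second, ``a point $p$ in general position and compatible with $T_{c}$'' is in tension with itself: for $\mathrm{Bl}_{p}X$ to inherit the torus action, for the test configuration to lift equivariantly, and for the equivariant version of the extremal blow-up theorem to apply, $p$ must lie in the fixed locus of (a torus containing) the extremal vector field, so the required positivity has to be achieved within this restricted set --- which is precisely what forces the delicate point/orbit choices in the original proof. Your Step 1 is fine as a sketch, granted the relative versions of geodesic convexity and of the slope inequality for $T$-equivariant test configurations, but the pivotal blow-up estimate is asserted rather than proved, and in a form that is not true.
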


Similar to the case of K\"ahler--Einstein metrics, verifying each stability condition is also a difficult problem for those canonical K\"ahler metrics.  But there have been some progress for K\"ahler--Ricci solitons and Mabuchi solitons when $X$ is a toric manifold. Every toric Fano manifold admits a K\"ahler--Ricci soliton by \cite[Theorem 1.1]{WZ04}. See also  \cite{SZ12} for the orbifold case.  The existence problem of Mabuchi soliton in low dimensional cases was considered in \cite{NSY23} based on the criterion given in  \cite{Yao22a}. In particular, the toric Fano manifold $X=\P_{\P^2}(\mathcal O\oplus \mathcal O(2))$ of dimension $3$ does not admit a Mabuchi soliton in the first Chern class, whereas it admits an extremal K\"ahler metric. 

Moreover, the above three canonical K\"ahler metrics exist on Fano surfaces (\cite{Ca82}, \cite{CLW} and \cite{Yao22a}) but not in higher dimensions for K\"ahler--Ricci solitons and Mabuchi solitions (\cite{MT} and \cite{NSY23}).  
There exist compact K\"ahler manifolds, that are not Fano, that do not admit an extremal K\"ahler metric (\cite{L}). However, to the best of the authors' knowledge, there is   no known example of a Fano manifold that does not admit an extremal K\"ahler metric 
in the literature. 

The folklore conjecture states that every {\em toric} Fano manifold admits an extremal Kahler metric in its first Chern class, motivated by the fact that every toric Fano manifold admits a K\"ahler--Ricci soliton. In particular, Mabuchi posed the following problem as an approach to the folklore conjecture. 

\begin{problem}\cite[Question 2]{Mab10}\label{MabuchiProblem}
Let $(X,-K_X)$ be a smooth polarized toric Fano manifold.
Is $(X,-K_X)$ always relatively K-polystable?
\end{problem}

However, we find a relatively K-unstable toric Fano manifold, which gives an explicit counterexample to the folklore conjecture by Theorem \ref{eK=>relK}. An instability criterion for the relative K-stability was proposed in \cite{YZ19}. But since this criterion is not optimal, we do not completely answer Problem \ref{MabuchiProblem} even in dimension $3$ as discussed in \cite{YZ23}. Despite this complication, we resolve Problem \ref{MabuchiProblem} in dimension $10$ based on the instability criterion in \cite{YZ19}. 

\begin{theorem}\label{thm:main}
    There exists a relatively K-unstable toric Fano manifold of dimension $10$.
\end{theorem}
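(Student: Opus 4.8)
The plan is to exhibit an explicit smooth toric Fano manifold $X$ of dimension $10$ and to certify its relative K-instability through the combinatorial description of relative K-stability for polarized toric manifolds, producing an explicit destabilizing toric test configuration; the search for $X$ is organized by the instability criterion of \cite{YZ19}.

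I would first fix the combinatorial framework. A smooth toric Fano manifold $X$ of dimension $n$, anticanonically polarized, corresponds to a smooth reflexive polytope $\Delta\subset\mathbb R^{n}$ (the moment polytope of $(X,-K_X)$), normalized so that $0\in\mathrm{int}\,\Delta$ and $\Delta=\{x:\langle u_i,x\rangle\le 1\}$, with $u_1,\dots,u_r\in\mathbb Z^{n}$ the primitive generators of the rays of the fan. Write $dx$ for Lebesgue measure on $\Delta$ and $d\sigma$ for the induced lattice measure on $\partial\Delta$. For a rational piecewise-linear convex function $f$ on $\Delta$, the relative Donaldson--Futaki invariant of the associated toric degeneration is
\[
\mathcal L_{\mathrm{ext}}(f)=\int_{\partial\Delta}f\,d\sigma-\int_{\Delta}\ell_{\mathrm{ext}}\,f\,dx ,
\]
where $\ell_{\mathrm{ext}}$ is the \emph{extremal affine function}: the unique affine function with $\int_\Delta \ell_{\mathrm{ext}}\,h\,dx=\int_{\partial\Delta}h\,d\sigma$ for every affine $h$ (equivalently, the $L^2(\Delta,dx)$-projection of the scalar-curvature density onto the space of affine functions). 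Since $X$ is relatively K-semistable exactly when $\mathcal L_{\mathrm{ext}}(f)\ge 0$ for every such $f$, it suffices --- by this characterization, together with Theorem \ref{eK=>relK} for the assertion about metrics --- to produce a single convex piecewise-linear $f$, not affine, with $\mathcal L_{\mathrm{ext}}(f)<0$; this simultaneously shows that $X$ is relatively K-unstable and that it carries no extremal K\"ahler metric in $c_1(X)$.

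The concrete steps are: (i) write down the candidate fan explicitly --- the ray generators $u_1,\dots,u_r\in\mathbb Z^{10}$ together with the list of maximal cones, coming from the search underlying \cite{YZ19} --- and verify directly that $\Delta$ is reflexive and Delzant (each vertex unimodular), so that $X$ is a genuine smooth toric Fano tenfold; (ii) compute, with exact rational arithmetic (triangulating $\Delta$, or decomposing it over its facets through $0$), the volume $\mathrm{Vol}\,\Delta$, the moments $\int_\Delta x_i\,dx$ and $\int_\Delta x_ix_j\,dx$, and the boundary integrals $\int_{\partial\Delta}d\sigma$ and $\int_{\partial\Delta}x_i\,d\sigma$, and solve the resulting $(n+1)\times(n+1)$ linear system for the rational coefficients of $\ell_{\mathrm{ext}}$; (iii) apply the instability criterion of \cite{YZ19} --- a sufficient condition for relative K-instability phrased in terms of $\ell_{\mathrm{ext}}$ and a concrete rational piecewise-linear convex test function $f$ attached to $\Delta$ (roughly, one adapted to ``pulling'' a distinguished facet) --- and verify that it is met, i.e.\ that $\mathcal L_{\mathrm{ext}}(f)<0$ by directly evaluating $\int_{\partial\Delta}f\,d\sigma$ and $\int_\Delta\ell_{\mathrm{ext}}f\,dx$; (iv) for each $n\ge 11$, pass to $X\times\mathbb{P}^{n-10}$ and note, by a routine additivity computation using that $\mathbb{P}^{n-10}$ is K\"ahler--Einstein, that the pullback of $f$ --- paired against the extremal affine function of the product polytope --- still has negative relative Donaldson--Futaki invariant.

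The main obstacle is the interplay of \emph{search} and \emph{exact bookkeeping}: one must locate, among the vast number of smooth toric Fano tenfolds, one whose extremal affine function $\ell_{\mathrm{ext}}$ is skew enough --- equivalently, whose moment polytope is sufficiently asymmetric --- that the (admittedly non-optimal) criterion of \cite{YZ19} detects instability, and then carry out the ten-dimensional volume and moment integrals exactly so that the sign of $\mathcal L_{\mathrm{ext}}(f)$ is genuinely certified rather than only numerically indicated. Dimension $10$ is where this first succeeds: the destabilizing inequality becomes available only once the polytope carries enough facets and the requisite asymmetry, which is exactly what the construction must engineer.
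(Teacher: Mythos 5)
Your framework and intended route are the same as the paper's: set up the relative Donaldson--Futaki functional $\mathcal L_P$ with the extremal (potential) affine function determined by $\mathcal L_P(1)=\mathcal L_P(x_i)=0$, and invoke the instability criterion of \cite{YZ19} after exact rational computation of the relevant volumes and moments. However, as written your proposal has a genuine gap: Theorem \ref{thm:main} is an existence statement, and its entire content is a concrete certified example, which you never produce. Step (i) says to ``write down the candidate fan explicitly \dots coming from the search underlying \cite{YZ19}'', but no ray generators, no polytope, no potential function and no verified inequality appear. Moreover the provenance you point to is wrong: the computations in \cite{YZ19} concern low dimensions, and the paper explicitly checks that \emph{no} toric Fano manifold of dimension at most $5$ satisfies the criterion, so no candidate in dimension $10$ can simply be read off from that search. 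The paper's example is obtained by a genuinely new construction: it identifies the $5$-dimensional manifold with ID:788 as the ``closest'' case and generalizes it to a family $\mathfrak{X}_r$ of dimension $5r$ (two sequences of $r$ blow-ups of $\mathbb{P}^{2r}\times\mathbb{P}^{2r}\times(\mathbb{P}^1)^r$ along codimension-$2$ torus-invariant centers), taking $r=2$ to get the $10$-fold whose moment polytope has $500$ vertices. Without specifying such a manifold and carrying out the verification, the ``main obstacle'' you correctly identify (search plus exact bookkeeping) is precisely the proof, and it remains undone.

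Two smaller points. First, you state the criterion of \cite{YZ19} only vaguely (``a test function adapted to pulling a distinguished facet''); the actual sufficient condition used is quantitative: with $\theta_P(\bs x)=\sum\alpha_i x_i+c$ and $P^-=\{\bs x\in P:\ 1-\theta_P(\bs x)\le 0\}$ of positive volume, instability follows from
\[
1-c<\frac{\int_{P^-}(1-\theta_P)^2\,dv}{\Vol(P^-)},
\]
and verifying this (the paper does so with SageMath and LattE, recording exact rational values) is the computational heart of the argument. Second, your closing claim that ``dimension $10$ is where this first succeeds'' is not justified by anything in your outline and is not claimed by the paper either: the criterion is not optimal, so the nonexistence of lower-dimensional examples (beyond the checked failure of the criterion up to dimension $5$) is left open. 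Step (iv), on products for $n\ge 11$, is not needed for the statement at hand.
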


Thanks to Theorem \ref{eK=>relK}, it implies the following. 

\begin{corollary}\label{noExtK}
 There exists a toric Fano manifold of dimension $10$ that does not admit an extremal K\"ahler metric in the first Chern class.
\end{corollary}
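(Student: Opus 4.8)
The corollary is a direct consequence of the two results already stated. The plan is: fix, by Theorem~\ref{thm:main}, a toric Fano manifold $X$ of dimension $10$ that is relatively K-unstable. Since $X$ is Fano, $-K_X$ is ample and $c_1(X)=c_1(-K_X)$, so an extremal K\"ahler metric in $c_1(X)$ is exactly an extremal K\"ahler metric in the class of the polarization $(X,-K_X)$. Suppose, for contradiction, such a metric existed. Then Theorem~\ref{eK=>relK} would make $(X,-K_X)$ relatively K-polystable, hence in particular not relatively K-unstable --- contradicting the choice of $X$. Therefore $X$ carries no extremal K\"ahler metric in $c_1(X)$. There is nothing subtle at this level; the whole content sits in Theorem~\ref{thm:main}.

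It is therefore worth describing how I would establish Theorem~\ref{thm:main}, working purely combinatorially. A toric Fano manifold $X$ is recorded by its reflexive (moment) polytope $P=P_{-K_X}\subset M_{\mathbb{R}}$, and, in Donaldson's toric framework as refined to the relative setting by Sz\'ekelyhidi and others, relative K-stability of $(X,-K_X)$ is governed by the linear functional
\[
\mathcal{L}(f)\;=\;\int_{\partial P} f\, d\sigma\;-\;\int_P \chi_{\mathrm{ext}}\cdot f\, dx
\]
on convex functions $f$ on $P$; here $d\sigma$ is the lattice boundary measure and $\chi_{\mathrm{ext}}$ is the unique affine function with $\int_P \chi_{\mathrm{ext}}\,\ell\, dx=\int_{\partial P}\ell\, d\sigma$ for all affine $\ell$, so that $\mathcal{L}$ annihilates affine functions. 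Relative K-polystability means $\mathcal{L}(f)>0$ for every non-affine convex $f$, and relative K-instability means $\mathcal{L}(f)\le 0$ for some non-affine convex $f$. The point of the instability criterion of \cite{YZ19} is that this becomes checkable: it suffices to test $\mathcal{L}$ on one simple convex rational piecewise-linear function --- for instance a single ``crease'' $f=\max\{0,\,\ell\}$ with $\ell$ affine and changing sign on $P$ --- and verify $\mathcal{L}(f)\le 0$.

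Concretely I would: (i) narrow the search to a structured family of high-dimensional toric Fano manifolds expected heuristically to be ``far'' from balanced --- iterated projective bundles over products, or small blow-ups thereof --- rather than sweeping the full classification, which in dimension $10$ is prohibitively large; (ii) for each candidate, form the moments $\int_P\ell\, dx$ and boundary moments $\int_{\partial P}\ell\, d\sigma$ (the latter split into facet contributions under the appropriate lattice normalization) and solve the resulting $(n{+}1)\times(n{+}1)$ rational linear system for $\chi_{\mathrm{ext}}$; (iii) run over candidate crease functions $\ell$ and evaluate $\mathcal{L}(\max\{0,\ell\})$ in exact rational arithmetic; (iv) exhibit one $10$-dimensional $X$ together with one $\ell$ for which $\mathcal{L}(\max\{0,\ell\})\le 0$, whence $X$ is relatively K-unstable. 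The main obstacle is the combination of a huge combinatorial search space --- the real work is locating the right sub-family and the right crease, since generic choices give strictly positive values --- with the need to carry out dimension-$10$ polytope volume and facet-integral computations exactly and reliably, which essentially forces reliance on computer algebra together with independent checks. Once Theorem~\ref{thm:main} is in hand the corollary follows as above; and, although not needed for $n=10$, taking products $X\times\mathbb{P}^{\,n-10}$ with the K\"ahler--Einstein (hence extremal) projective space transports relative K-instability to every dimension $n\ge 11$.
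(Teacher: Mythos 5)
Your argument for the corollary is exactly the paper's: take the relatively K-unstable $10$-dimensional toric Fano manifold from Theorem \ref{thm:main}, note that an extremal K\"ahler metric in $c_1(X)=c_1(-K_X)$ would force relative K-polystability by Theorem \ref{eK=>relK}, and conclude by contradiction. The additional sketch of how Theorem \ref{thm:main} itself would be established (the Yau--Zhou-type instability test via a crease function, exact rational computation of the potential and the relevant integrals) is consistent in spirit with what the paper actually does, so there is nothing to correct.
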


The toric Fano manifold $X_P$ of dimension $10$ is constructed based on observations regarding the $866$
toric Fano manifolds of dimension $5$. Even though no toric Fano manifold of dimension $5$ satisfies the instability criterion in (\ref{ineq:insta}), 
the $5$-dimensional toric Fano manifold 
with ID:788 according to \cite{Paffenholz} 
appears to be the most suitable candidate for our purpose. 
Thus, as a generalization of this $5$-dimensional toric Fano manifold, we construct a toric Fano manifold $\mathfrak{X}_r$ of dimension $5r$ for $r\in\N$ and prove that $\mathfrak{X}_2$ satisfies the instability criterion. 
It is expected that $\mathfrak{X}_r$ also has interesting geometric properties for $r\ge 3$. 
See Example \ref{10dim} for the explicit combinatorial description of the $10$-dimensional toric Fano manifold. 
In Example \ref{mainex}, we provide a precise geometric construction of $\mathfrak{X}_r$. 
Moreover, some geometric aspects of $\mathfrak{X}_r$ are discussed in Remark \ref{mainremark}. 

\begin{question}
    Is it always true that  $\mathfrak{X}_r$ does not admit an extremal K\"ahler metric for $r \geq 3$?
\end{question}
In order to provide some examples of toric Fano manifolds that do not admit extremal K\"ahler metrics in higher dimensions, we 
recall that the extremal K\"ahler metric on a product of polarized K\"ahler manifolds is always a product metric.

\begin{theorem}$($\cite[Theorem 1]{AH}, \cite[Theorem 1.2]{Hu}, \cite[Corollary 1.4]{ST}$)$ 
Let $(X, L)$ be the product of two polarized K\"ahler
manifolds $(X_1, L_1)$ and $(X_2, L_2)$. Assume that $X$ admits an extremal K\"ahler metric $g$ in the class $2\pi c_1(L)$.
Then $g$ is a product metric $g_1 \times g_2$, where each $g_i$ is an extremal metric on $X_i$ in the class $2\pi c_1(L_i)$. 
\end{theorem}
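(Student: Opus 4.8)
\smallskip
\noindent\emph{Sketch of a proof strategy.}
The plan is to deduce from the existence of an extremal metric on $X$ that \emph{each} factor $X_i$ carries an extremal K\"ahler metric in $2\pi c_1(L_i)$, and then to close the argument by uniqueness of extremal metrics modulo connected automorphisms. The organizing principle is the variational characterization of extremal metrics: by the extremal refinement of the Chen--Cheng theorem (see also work of He and of Berman--Darvas--Lu), a polarized manifold $(Y,M)$ admits an extremal K\"ahler metric in $2\pi c_1(M)$ if and only if the modified Mabuchi functional $\mathcal M_{V_Y}$ --- relative to the Futaki--Mabuchi extremal vector field $V_Y$ of the class --- is proper modulo the connected automorphism group $\Aut_0(Y)$. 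So it suffices to transfer this properness from $X$ to $X_1$ and to $X_2$ separately.

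\smallskip
First I would assemble the structural facts about the product $X=X_1\times X_2$. Since $\mathcal T_X=p_1^*\mathcal T_{X_1}\oplus p_2^*\mathcal T_{X_2}$ and each $X_i$ is compact and connected, the K\"unneth formula gives $H^0(X,\mathcal T_X)\cong H^0(X_1,\mathcal T_{X_1})\oplus H^0(X_2,\mathcal T_{X_2})$ as Lie algebras, hence $\Aut_0(X)=\Aut_0(X_1)\times\Aut_0(X_2)$. Next, the extremal vector field splits, $V_X=V_{X_1}+V_{X_2}$ with $V_{X_i}$ the extremal vector field of $(X_i,2\pi c_1(L_i))$: this follows from the Futaki--Mabuchi description of $V_X$ (which depends only on the class) together with the fact that, on the Lie algebra $\mathfrak t=\mathfrak t_1\oplus\mathfrak t_2$ of a maximal torus $T_1\times T_2\subset\Aut_0(X)$, both the Futaki--Mabuchi bilinear form and the Futaki invariant split as orthogonal direct sums over the two factors --- one evaluates them on a \emph{product} reference metric, where the Ricci form, the scalar curvature and the volume form all decompose. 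Finally, restricting to the affine subspace of product potentials $\phi=\phi_1+\phi_2$ ($\phi_i\in C^\infty(X_i)$) and using a product reference $\omega_0=\omega_1^0\oplus\omega_2^0$, a direct computation gives
\[
\mathcal M_{V_X}^X(\phi_1+\phi_2)=\Vol(X_2)\,\mathcal M_{V_{X_1}}^{X_1}(\phi_1)+\Vol(X_1)\,\mathcal M_{V_{X_2}}^{X_2}(\phi_2)+c_0 ,
\]
with $c_0$ a constant, together with the analogous identity for whichever ``distance to $\Aut_0$'' functional $\mathfrak J$ (the $J$-functional, $I-J$, or Darvas' $d_1$-distance) enters the properness inequality; moreover $(\sigma_1,\sigma_2)\cdot(\phi_1+\phi_2)=(\sigma_1\cdot\phi_1)+(\sigma_2\cdot\phi_2)$, so infima of $\mathfrak J^X$ over $\Aut_0(X)$ split over product potentials as well.

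\smallskip
The core step is then short. Existence of an extremal metric on $X$ yields $\mathcal M_{V_X}^X(\phi)\ge\varepsilon\,\inf_{\sigma\in\Aut_0(X)}\mathfrak J^X(\sigma\cdot\phi)-C$ for suitable $\varepsilon>0,\ C$ and all K\"ahler potentials $\phi$. Applying this to $\phi=\phi_1+0$ and using the splitting formulas, every term carried by the second factor becomes a fixed constant, leaving $\mathcal M_{V_{X_1}}^{X_1}(\phi_1)\ge\varepsilon\,\inf_{\sigma_1\in\Aut_0(X_1)}\mathfrak J^{X_1}(\sigma_1\cdot\phi_1)-C'$ for all $\phi_1$; thus $\mathcal M_{V_{X_1}}^{X_1}$ is proper modulo $\Aut_0(X_1)$, so $X_1$ admits an extremal metric $\omega_1^\ast\in2\pi c_1(L_1)$, and symmetrically $X_2$ admits $\omega_2^\ast$. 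The product $\omega_1^\ast\oplus\omega_2^\ast$ is extremal on $X$, since $s(\omega_1^\ast\oplus\omega_2^\ast)=s(\omega_1^\ast)+s(\omega_2^\ast)$ and the $(1,0)$-gradient, with respect to a product metric, of a function pulled back from a factor is the pullback of the gradient on that factor, so that $\mathrm{grad}^{\C}(s(\omega_1^\ast\oplus\omega_2^\ast))=\mathrm{grad}^{\C}_{\omega_1^\ast}s(\omega_1^\ast)+\mathrm{grad}^{\C}_{\omega_2^\ast}s(\omega_2^\ast)$ is holomorphic. By uniqueness of extremal metrics modulo $\Aut_0(X)$ (Berman--Berndtsson), the given metric satisfies $g=\sigma^\ast(\omega_1^\ast\oplus\omega_2^\ast)$ for some $\sigma=(\sigma_1,\sigma_2)\in\Aut_0(X_1)\times\Aut_0(X_2)$, whence $g=(\sigma_1^\ast\omega_1^\ast)\times(\sigma_2^\ast\omega_2^\ast)$ is a product of extremal metrics.

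\smallskip
The hard part will be the bookkeeping together with the correct invocation of the analytic inputs. On the computational side one must check carefully that the \emph{modified} Mabuchi functional and the chosen properness functional really split over products with positive coefficients; the modification term (involving the Hamiltonian of $V_X$, or an exponential thereof) and the precise choice of $\mathfrak J$ are where normalization and sign errors would enter, and one should also verify the splitting of the extremal vector field so that these are indeed the \emph{right} modified functionals on the factors. On the analytic side one must use the extremal --- not merely the cscK --- version of the properness$\,\Leftrightarrow\,$existence theorem, and be consistent about which subgroup of $\Aut_0(X)$ the properness is taken modulo (in every standard formulation it splits over the factors, which is all the argument needs). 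Finally, note that the purely algebraic route --- via Theorem \ref{eK=>relK} and the fact that relative K-polystability of a product descends to the factors --- does not suffice here, precisely because only one implication of the relative Yau--Tian--Donaldson correspondence is currently known.
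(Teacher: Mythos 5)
This theorem is not proved in the paper at all: it is quoted verbatim from Apostolov--Huang, Huang, and Sano--Tipler, so there is no internal proof to match your sketch against, and your route is genuinely different from the ones in those references. Apostolov--Huang obtain the splitting from the uniqueness theory of extremal metrics together with a direct splitting argument (no variational existence criterion was available in 2015), Huang treats the toric case by convex analysis on the product polytope, and Sano--Tipler deduce it from quantization by relative balanced metrics; by contrast you run the argument through the modern variational characterization (existence $\Leftrightarrow$ properness of the modified Mabuchi functional modulo the automorphism group, in the extremal version due to He building on Chen--Cheng and Berman--Darvas--Lu), transfer properness to the factors via the splitting of functionals on product potentials, and finish with Berman--Berndtsson uniqueness modulo $\Aut_0(X)=\Aut_0(X_1)\times\Aut_0(X_2)$. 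I believe this strategy is sound, and it has the virtue of being modular, but it trades the comparatively self-contained arguments of the cited papers for the full strength of recent analytic existence theory. The points you must nail down are exactly the ones you flag: (i) the splitting $V_X=V_{X_1}+V_{X_2}$ of the Futaki--Mabuchi extremal field and the exact volume-weighted splitting of the \emph{modified} K-energy and of the distance/$J$-type functional on product potentials (note that for the one-sided estimate you only need these on potentials of the form $\varphi_1+0$, where the inequality $\mathfrak J^X(\sigma\cdot(\varphi_1+0))\geqslant \Vol(X_2)\,\mathfrak J^{X_1}(\sigma_1\cdot\varphi_1)$ together with $\mathfrak J\geqslant 0$ suffices); (ii) that properness is taken modulo the correct group (the connected reduced automorphism group, which also splits over the factors) and over torus-invariant potentials, consistently with He's theorem in both directions; and (iii) that the pullback by $\sigma=(\sigma_1,\sigma_2)$ of a product metric is again a product metric, which is what converts uniqueness into the stated conclusion about the given $g$. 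Your closing remark is also correct and worth keeping: the purely algebraic detour through Theorem \ref{eK=>relK} cannot prove this statement, since only one implication of the relative Yau--Tian--Donaldson correspondence is known.
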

 
Thus, by taking a product of our example $\mathfrak{X}_2$ in Corollary \ref{noExtK} with {\it any} toric Fano manifold $X$, 
one can immediately produce an example $Y:=\mathfrak{X}_2\times X$ not admitting an extremal K\"ahler metric in every higher dimension. This has been pointed out by Nakamura \cite{Na24}. 

\begin{corollary}\label{noExtK2}
 For each $n \geq 10$, there exists a toric Fano manifold of dimension $n$ that does not admit an extremal K\"ahler metric
 in the first Chern class. 
\end{corollary}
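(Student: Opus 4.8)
The plan is to reduce everything to the $10$-dimensional example $\mathfrak{X}_2$ from Corollary \ref{noExtK} by taking products and invoking the product structure theorem for extremal metrics quoted above. For $n=10$ there is nothing new to do: $\mathfrak{X}_2$ is itself a toric Fano manifold of dimension $10$ carrying no extremal K\"ahler metric in its first Chern class. So fix $n \geq 11$ and set $Y := \mathfrak{X}_2 \times X$, where $X$ is an arbitrary toric Fano manifold of dimension $n-10 \geq 1$, for instance $X=(\P^1)^{n-10}$ or $X = \P^{n-10}$.

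First I would check that $Y$ is again a smooth toric Fano manifold of dimension $n$. The toric and smoothness claims are immediate: the fan of $Y$ is the product of the fans of $\mathfrak{X}_2$ and $X$, and smoothness of a fan is a cone-by-cone condition. For the Fano property, write $p_1 \colon Y \to \mathfrak{X}_2$ and $p_2 \colon Y \to X$ for the projections and recall that $-K_Y = p_1^*(-K_{\mathfrak{X}_2}) + p_2^*(-K_X)$; since $-K_{\mathfrak{X}_2}$ and $-K_X$ are ample on the respective factors, this external sum is ample on the product, so $Y$ is Fano.

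Next, suppose for contradiction that $Y$ admits an extremal K\"ahler metric $g$ in $2\pi c_1(Y) = 2\pi c_1(-K_Y)$. Put $L_1 := -K_{\mathfrak{X}_2}$ and $L_2 := -K_X$. The identity $-K_Y = p_1^* L_1 + p_2^* L_2$ exhibits $(Y,-K_Y)$ precisely as the product of the polarized manifolds $(\mathfrak{X}_2, L_1)$ and $(X, L_2)$, so the product theorem quoted above applies and forces $g$ to split as a product metric $g = g_1 \times g_2$, with $g_1$ an extremal K\"ahler metric on $\mathfrak{X}_2$ in the class $2\pi c_1(L_1) = 2\pi c_1(\mathfrak{X}_2)$. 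This contradicts Corollary \ref{noExtK}. Hence $Y$ admits no extremal K\"ahler metric in its first Chern class, which settles all $n \geq 11$ and, together with the case $n=10$, proves the corollary.

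This is a short deduction from results already in hand, so there is no genuine obstacle; the only step deserving care is the identification of the anticanonical polarization of the product with the external sum $p_1^*(-K_{\mathfrak{X}_2}) + p_2^*(-K_X)$ of the anticanonical polarizations of the two factors, since this is exactly the hypothesis under which the cited product theorem is stated.
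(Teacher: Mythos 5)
Your proposal is correct and follows essentially the same route as the paper: take $Y=\mathfrak{X}_2\times X$ with $X$ an arbitrary toric Fano manifold of dimension $n-10$ (the case $n=10$ being Corollary \ref{noExtK} itself), and apply the quoted splitting theorem for extremal K\"ahler metrics on products to contradict Corollary \ref{noExtK}. The extra verification that $-K_Y=p_1^*(-K_{\mathfrak{X}_2})+p_2^*(-K_X)$ is ample, so that the product theorem applies to the anticanonical polarization, is exactly the routine point the paper leaves implicit.
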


It is observed that the existence of a Mabuchi soliton implies the existence of an extremal K\"ahler metric (see \cite[Theorem 9.8]{Ma21}, \cite[Remark 2.22]{Hi}, and \cite[Remark 5.8]{Yao22b}).  

\begin{theorem}
    If a Fano manifold admits a Mabuchi soliton, then it admits an extremal K\"ahler metric in the first Chern class.
\end{theorem}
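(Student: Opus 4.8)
The plan is to deduce the existence of an extremal Kähler metric from an $\Aut_0(X)$-coercivity estimate for the relative Mabuchi ($K$-energy) functional, obtained from the Mabuchi soliton by comparison with the modified Ding functional. Fix a Mabuchi soliton $\omega$ on $X$, let $F := F_\omega$ be its Ricci potential, and set $V := \mathrm{grad}^{\mathbb{C}}_{\omega}(1-e^{F})$, which by hypothesis is holomorphic; since Mabuchi solitons are unique up to $\Aut_0(X)$, we may assume $V$ lies in the Lie algebra $\mathfrak t$ of a maximal compact torus $T \subset \Aut_0(X)$.

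The first step is to identify $V$ with the extremal vector field $V_{\mathrm{ext}}$ of $(X,-K_X)$. Taking the $\omega$-trace of $\mathrm{Ric}(\omega)-\omega = \tfrac{\sqrt{-1}}{2\pi}\partial\bar\partial F$ gives $s(\omega)-n = \Delta_\omega F$, with $\Delta_\omega$ the complex Laplacian. Writing the soliton condition covariantly as the vanishing of the $(2,0)$-part of the Hessian of $e^{F}$, differentiating this identity and contracting with the metric — and using that the $\omega$-holomorphy potentials are the solutions $\theta$ of the $F$-twisted equation $\Delta_\omega\theta + \langle\partial\theta,\partial F\rangle_\omega + \theta = \mathrm{const}$ — one finds that $s(\omega)-n$ and $1-e^{F}$ have the same $L^{2}(\omega)$-orthogonal projection onto the space of holomorphy potentials. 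As $V_{\mathrm{ext}}$ is, by definition, the holomorphic vector field whose holomorphy potential is this projection, and the projection is independent of the Kähler metric chosen in $c_1(X)$, we get $V = V_{\mathrm{ext}}$; in particular $V_{\mathrm{ext}} \in \mathfrak t$.

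Next, by the equivalence recalled above (combining \cite{HL}, \cite{Yao22b} and \cite{Hi}), the existence of a Mabuchi soliton makes $X$ uniformly relatively Ding stable, so the modified Ding functional $\mathcal D_{V_{\mathrm{ext}}}$ is $\Aut_0(X)$-coercive on $T$-invariant Kähler potentials in $c_1(X)$. The term subtracted from the Ding (respectively Mabuchi) functional to build $\mathcal D_{V_{\mathrm{ext}}}$ (respectively the relative Mabuchi functional $\mathcal M_{V_{\mathrm{ext}}}$) is the same action functional of the $V_{\mathrm{ext}}$-flow, so the classical inequality $\mathcal M \ge \mathcal D$ between the Mabuchi and Ding functionals upgrades to $\mathcal M_{V_{\mathrm{ext}}} \ge \mathcal D_{V_{\mathrm{ext}}}$; hence $\mathcal M_{V_{\mathrm{ext}}}$ is $\Aut_0(X)$-coercive as well. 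Finally, the resolution of the properness conjecture for extremal Kähler metrics (the extremal analogue of the Chen--Cheng a priori estimates) converts this coercivity into an extremal Kähler metric in $c_1(X)$.

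The main obstacle is the first step: showing that the Mabuchi-soliton vector field is precisely the extremal vector field, which requires the Bochner-type computation above and genuinely uses the special shape of the soliton equation. (What is truly needed is only that the $\Aut_0(X)$-coercivity produced by the soliton is coercivity of a functional whose coercivity yields an extremal metric, so even a relation weaker than equality of the two vector fields would suffice.) A secondary technical point is to align the precise forms of the two ``coercivity $\Leftrightarrow$ existence'' theorems, the modified Ding--Mabuchi inequality, and the ``uniform stability $\Leftrightarrow$ coercivity'' dictionary, so that the reduction modulo $\Aut_0(X)$ is consistent across the soliton and extremal settings.
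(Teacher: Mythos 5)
Your proposal is correct and follows essentially the same route as the paper, which does not give an independent argument but points to \cite[Theorem 9.8]{Ma21}, \cite[Remark 2.22]{Hi} and \cite[Remark 5.8]{Yao22b}: identify the Mabuchi-soliton vector field with the extremal vector field, obtain coercivity of the modified Ding functional from the soliton (equivalently from uniform relative Ding stability via \cite{HL}, \cite{Yao22b}, \cite{Hi}), pass to the relative K-energy through the entropy inequality $\mathcal M_{V}\geq \mathcal D_{V}$ (the corrections being the same functional), and conclude with He's properness theorem for extremal metrics. Your step identifying the two vector fields is also sound, since for any K\"ahler form in $c_1(X)$ the potentials $1-e^{F}$ and $s(\omega)-n$ pair identically with holomorphy potentials (both computing the Futaki character), so their $L^{2}$-projections coincide.
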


Thus,  by the additivity of the Mabuchi constant for the product manifold (\cite[Proposition 2]{Y23}), Corollary \ref{noExtK2} implies the following 
\begin{corollary}\label{noMS}
 For each $n \geq 10$, there exists a toric Fano manifold of dimension $n$ admitting neither an extremal K\"ahler metric
 nor a Mabuchi soliton in the first Chern class. 
\end{corollary}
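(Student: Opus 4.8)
The plan is to read the statement off from Corollary~\ref{noExtK2} by also eliminating Mabuchi solitons, for which the crucial input is the implication, recalled just above, that the existence of a Mabuchi soliton forces the existence of an extremal K\"ahler metric in the first Chern class. First I would fix as the model in dimension $10$ the toric Fano manifold $\mathfrak{X}_2$ of Corollary~\ref{noExtK}. By that corollary $\mathfrak{X}_2$ admits no extremal K\"ahler metric in $c_1(\mathfrak{X}_2)$, so the above implication shows at once that $\mathfrak{X}_2$ admits no Mabuchi soliton either; this disposes of the case $n=10$. For $n>10$ I would take $Y:=\mathfrak{X}_2\times X$, where $X$ is an \emph{arbitrary} toric Fano manifold of dimension $n-10$ (for instance $X=(\P^1)^{n-10}$ or $X=\P^{n-10}$). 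The fan of $Y$ is the product of the two fans, so smoothness and ampleness of $-K_Y=(-K_{\mathfrak{X}_2})\boxtimes(-K_X)$ are inherited; thus $Y$ is a toric Fano manifold of dimension $n$.

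It then remains to check that $Y$ admits neither an extremal K\"ahler metric nor a Mabuchi soliton in $c_1(Y)$. The first assertion is exactly Corollary~\ref{noExtK2}; concretely, by the splitting theorem for extremal metrics on products stated above, an extremal metric on $Y$ would be a product $g_1\times g_2$ with $g_1$ extremal on $\mathfrak{X}_2$, contradicting Corollary~\ref{noExtK}. For the second assertion I would offer two arguments. The quicker one applies the implication ``Mabuchi soliton $\Rightarrow$ extremal K\"ahler metric'' to $Y$ itself: combined with the non-existence of an extremal metric on $Y$ it immediately rules out a Mabuchi soliton on $Y$. The alternative, which is the one indicated in the text and stays entirely on the soliton side, uses the additivity of the Mabuchi constant under products \cite[Proposition~2]{Y23}: this shows that the product $Y=\mathfrak{X}_2\times X$ can admit a Mabuchi soliton only if the factor $\mathfrak{X}_2$ does, which we have already excluded. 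Either way $Y$ is a toric Fano manifold of the prescribed dimension $n$ admitting neither canonical metric, which is the assertion of the corollary.

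As the statement is a formal consequence of results already assembled in the excerpt, I do not anticipate a real obstacle. The only two points deserving a line of justification are the (standard) closure of the class of toric Fano manifolds under products, and the precise content of \cite[Proposition~2]{Y23}: one must make sure the additivity of the Mabuchi constant is stated in a form that genuinely propagates the non-existence of a Mabuchi soliton from a factor to the product. I would settle the latter by quoting \cite[Proposition~2]{Y23} directly rather than reproving it, and would not labour it since the implication ``Mabuchi soliton $\Rightarrow$ extremal K\"ahler metric'' already yields the conclusion by itself.
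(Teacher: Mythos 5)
Your proposal is correct and follows the paper's own route: the paper likewise deduces the corollary from Corollary \ref{noExtK2} together with the implication ``Mabuchi soliton $\Rightarrow$ extremal K\"ahler metric'', and its independent proof in Subsection \ref{sec:MC} is exactly your alternative via additivity of the Mabuchi constant combined with Proposition \ref{MabuchiTest}, i.e.\ $M_Y=M_{\mathfrak{X}_2}+M_X>1$ since $M_{\mathfrak{X}_2}\approx 2.45$. The only point to spell out in the additivity variant is that $M_X\ge 0$ for every toric Fano factor (the potential $\theta_P$ is normalized to have zero mean), which is what turns additivity into the propagation of instability you describe.
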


In fact, Corollary \ref{noMS} can alternatively be proven  by computing the Mabuchi constant using Proposition \ref{MabuchiTest}. See Subsection \ref{sec:MC} for our independent proof.  
 
We end the introduction by proposing the following natural question: 

\begin{question}
    Is there a toric Fano manifold of dimension less than $10$ that does not admit an extremal K\"ahler metric in the first Chern class? 
\end{question}
 
Throughout the paper, we assume that a {\em{manifold}} is a smooth irreducible complex projective variety, unless otherwise specified.

\vskip 7pt

\noindent{\bfseries Acknowledgements.}
DongSeon Hwang was supported by the National Research Foundation of Korea(NRF) grant funded by the Korea government(MSIT) (2021R1A2C1093787) and the Institute for Basic Science (IBS-R032-D1). H. Sato and N. Yotsutani was partially supported by JSPS KAKENHI JP24K06679 and JP22K03316,  respectively.
The authors would like to thank Satoshi Nakamura, Yee Yao, and Atsushi Ito for their interests in this work and helpful comments. In particular, S. Nakamura kindly let us know a nice application (Corollary \ref{noExtK2}) of our theorem.

\section{Preliminaries}
In this section, we recall the notion of relative K-polystability for a polarized toric manifold and the instability criterion of relative K-polystability given in \cite{YZ19}.

\subsection{Potential functions}
Let $(X,\omega)$ be a Fano manifold with a K\"ahler metric $\omega$.
Let $\mathrm{Aut}^0(X)$ be the identity component of the automorphism group.
Then the Lie algebra $\mathfrak h(X)$ of $\mathrm{Aut}^0(X)$ consists of holomorphic vector fields. 
As in \cite[Section 1]{ZZ08}, for any $v\in \mathfrak h(X)$, there is a unique smooth function $\theta_v(\omega): X\to \C$ such that
\[
\iota_v \omega=\sqrt{-1} ~ \overline{\partial}\theta_v(\omega)
\quad \text{and} \quad \int_X \theta_v(\omega)\omega^n=0.
\]
We call $\theta_v (\omega)$ the {\emph{potential function}} of $v$ with respect to $\omega$.

When $X$ is a toric Fano manifold with the moment polytope $P$ and $v$ is the extremal vector field of $X$, we can determine the potential function by the following conditions.
See \cite[Lemma 2.1]{YZ19} and \cite[Section 5.1, $(30)$]{Yao22a} for more details.
\begin{lemma}\label{lem:PotentialFun}
Let $X_P$ be the $n$-dimensional toric Fano manifold associated with an $n$-dimensional reflexive Delzant polytope $P$ in $M_\R$.
Then the potential function $\theta_P$ of the extremal vector field $v$, which is affine linear on $P$ and normalized by $\int_P\theta_P \,dv=0$, is independent of choice of $v$ and uniquely determined by the $n+1$ equations
\[
\mathcal L_P(1)=0,\quad \mathcal L_P(x_i)=0 \quad \text{for}\quad i=1, \dots, n.
\]
\end{lemma}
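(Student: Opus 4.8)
The plan is to recast the statement as a non-degenerate linear system on the $(n+1)$-dimensional space $\mathrm{Aff}(P)$ of affine-linear functions on $P$, once the definition of the extremal vector field has been translated into combinatorial data of the polytope. The substantive content of the lemma is the existence and uniqueness of $\theta_P$ together with its independence of $v$; the bridge between the differential-geometric definition of the extremal vector field and the combinatorial functional $\mathcal L_P$ is the part that must be set up carefully.

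First I would recall the characterization of the extremal vector field $v$ of $X_P$, which exists for \emph{any} toric Fano regardless of whether $X_P$ carries an extremal metric: $v$ lies in the Lie algebra $\mathfrak t$ of the compact torus, and for the fixed Fano K\"ahler form $\omega$ its potential function $\theta_v(\omega)$, normalized by $\int_X \theta_v(\omega)\,\omega^n=0$, is the $L^2(X,\omega^n)$-orthogonal projection of the normalized scalar curvature $s(\omega)-n$ onto the potentials of the holomorphic vector fields in $\mathfrak t$. Under the moment map these potentials are exactly the elements of $\mathrm{Aff}(P)$; the $L^2(X,\omega^n)$-pairing of potentials of $\mathfrak t$-fields equals, up to a fixed positive constant, the $L^2(P,dv)$-pairing of the corresponding affine functions; and Donaldson's boundary integration formula rewrites $\int_X (s(\omega)-n)\,\theta_f\,\omega^n$ as an expression built from $\int_{\partial P} f\,d\sigma$ and $\int_P f\,dv$. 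Substituting these identities into the orthogonality condition ``$\int_X (s(\omega)-n-\theta_v)\,\theta_f\,\omega^n=0$ for all affine $f$'' reproduces precisely the equations $\mathcal L_P(1)=0$ and $\mathcal L_P(x_i)=0$ for $i=1,\dots,n$, with $\mathcal L_P$ the functional recalled in \cite{YZ19} and \cite{Yao22a}; in particular $\mathcal L_P(1)=0$ is, via the reflexivity identity $\Vol(\partial P)=n\,\Vol(P)$, equivalent to the normalization $\int_P \theta_P\,dv=0$. This translation, and in particular keeping the normalizing constants consistent so that one lands on exactly these equations and not on scalar multiples of them, is where the real work lies; I expect it to be the main obstacle, everything else being formal.

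Next I would verify that these $n+1$ equations pin down a unique $\theta_P\in\mathrm{Aff}(P)$. Writing $\theta_P=c_0+\sum_{i=1}^n c_i x_i$ and setting $x_0:=1$, each $\mathcal L_P(x_j)$ is affine-linear in $(c_0,\dots,c_n)$, and after collecting terms the homogeneous part of the system has coefficient matrix a positive multiple of the Gram matrix $\big(\int_P x_i x_j\,dv\big)_{0\le i,j\le n}$. Since $1,x_1,\dots,x_n$ are linearly independent in $L^2(P,dv)$, this Gram matrix is positive definite, hence invertible, so the system has a unique solution; this yields both existence and uniqueness of the claimed affine-linear potential function. Finally, independence of the choice of $v$ is then immediate: $\mathcal L_P$ and the $n+1$ equations depend only on $P$, so any admissible extremal vector field has an affine-linear potential satisfying them, and by the uniqueness just established they all coincide with $\theta_P$ — equivalently, the positive-definiteness observed above shows directly that $v$ itself is unique. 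The linear-algebra and independence steps are routine; the only genuine difficulty is the bookkeeping of constants in the first step.
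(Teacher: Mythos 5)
Your argument is correct and is essentially the approach the paper relies on: the paper does not prove Lemma \ref{lem:PotentialFun} itself but defers to \cite{YZ19} and \cite{Yao22a}, where the extremal vector field's potential is characterized exactly as you describe, namely by the $L^2$-projection condition which, transported to the polytope, becomes $\mathcal L_P(f)=0$ for all affine $f$ (with $\mathcal L_P(1)=0$ recovering the normalization $\int_P\theta_P\,dv=0$ since $\bar S=\Vol(\partial P)/\Vol(P)$). Your uniqueness step via positive definiteness of the Gram matrix of $1,x_1,\dots,x_n$ on the full-dimensional $P$ is the same linear algebra underlying the paper's algorithm in Section \ref{sec:algorithm} (there the constant term is eliminated first, leaving the covariance matrix $c_{ij}-b_ib_j/\Vol(P)$), and the normalization-constant bookkeeping you flag as the main work is exactly what the conventions of \cite{ZZ08}, adopted by the paper, take care of.
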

For the definition of the functional $\mathcal L_P(\cdot)$, see \eqref{eq:ModifiedDF}.
 
In Section \ref{sec:algorithm}, we give a good overview which explains how to compute the potential function systematically for a given toric Fano manifold.
The following lemma is useful in computing the potential function in practice. 

\begin{lemma}\cite{ZZ08}\label{lem:ZZ}
Let $P$ be an $n$-dimensional reflexive Delzant polytope in $M_{\R}$.
Then we have the equalities
\begin{equation}\label{eq:Stokes}
\int_{\partial P} x_i\,d\sigma=(n+1)\int_Px_i\,dv \qquad \text{and} \qquad \Vol(\partial P)=n\Vol(P).
\end{equation}
\end{lemma}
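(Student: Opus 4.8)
The plan is to establish both identities at once by slicing $P$ into pyramids over its facets with common apex the origin --- a decomposition that is available precisely because $P$ is reflexive. Write $P=\{x\in M_\R:\langle x,v_F\rangle\ge -1 \text{ for every facet }F\}$ with $v_F\in N$ the primitive inward normal of $F$; reflexivity means exactly that $0$ lies in the interior of $P$ and that each $F$ lies on the hyperplane $\{\langle x,v_F\rangle=-1\}$, i.e.\ at lattice height $1$ from the origin. Put $C_F:=\conv(\{0\}\cup F)$. Then the $C_F$ have pairwise disjoint interiors and cover $P$, so $\int_P g\,dv=\sum_F\int_{C_F}g\,dv$ and $\Vol(\partial P)=\sum_F\int_F d\sigma$; it therefore suffices to compute $\int_{C_F}g\,dv$ facet by facet.

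The core of the argument is the radial substitution $x=t\xi$ on $C_F$, with $(t,\xi)\in[0,1]\times F$. First I would choose a $\Z$-basis of $M$ adapted to $F$ (possible since $v_F$ is primitive), so that $v_F$ becomes the last dual basis vector; then $dv$ is standard Lebesgue measure on $\R^n$, the facet $F$ sits inside $\R^{n-1}\times\{-1\}$, and $d\sigma$ is standard Lebesgue measure on $\R^{n-1}$. In these coordinates the Jacobian of $(t,\xi)\mapsto t\xi$ equals $t^{n-1}$, and a function $g$ homogeneous of degree $k$ satisfies $g(t\xi)=t^{k}g(\xi)$, whence
\[
\int_{C_F}g\,dv=\Big(\int_0^1 t^{\,n-1+k}\,dt\Big)\int_F g\,d\sigma=\frac{1}{n+k}\int_F g\,d\sigma .
\]
Taking $g\equiv 1$ ($k=0$) gives $\Vol(C_F)=\tfrac1n\int_F d\sigma$, and taking $g=x_i$ ($k=1$, since coordinate functions are linear) gives $\int_{C_F}x_i\,dv=\tfrac{1}{n+1}\int_F x_i\,d\sigma$. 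Summing over all facets and using the decomposition above yields $\Vol(\partial P)=n\,\Vol(P)$ and $\int_{\partial P}x_i\,d\sigma=(n+1)\int_P x_i\,dv$.

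I expect the only point that needs genuine care to be the bookkeeping of the two \emph{lattice-normalized} measures --- $dv$ on $M_\R$ and $d\sigma$ on the affine hull of a facet --- namely verifying that after the adapted change of basis they really do become the standard Lebesgue measures and that the height of every facet is exactly $1$; both facts come from reflexivity together with the primitivity of the $v_F$, and it is the uniformity of the heights that lets one pull the constants $\tfrac1n$ and $\tfrac1{n+1}$ out of the sum over facets. As a sanity check, the same two equalities also drop out of the divergence theorem applied to the vector fields $\vec x=\sum_j x_j\partial_{x_j}$ and $x_i\,\vec x$, whose divergences are $n$ and $(n+1)x_i$: on each facet $F$ one has $\langle \vec x,\nu\rangle\,dA=d\sigma$, where $\nu$ is the outward unit normal and $dA$ the Euclidean surface element, again because $F$ lies at lattice height $1$ and $v_F$ is primitive.
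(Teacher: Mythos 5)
Your argument is correct. You use the same decomposition as the paper --- the pyramids $\conv(\{0\}\cup F)$ over the facets, which is exactly where reflexivity (every facet at lattice height $1$ from the interior point $0$, with primitive normal) enters --- but your computational engine is different: instead of applying the divergence theorem to the vector field $f\cdot\bs x$ on each pyramid, as the paper does (using that $(\nu_k,\bs x)=0$ on the side faces through the origin), you compute each cone integral directly by the radial substitution $x=t\xi$, with Jacobian $t^{n-1}$ in an adapted unimodular coordinate system in which $d\sigma$ becomes Lebesgue measure on the facet hyperplane, and then exploit homogeneity of $1$ and $x_i$ to get the factors $\tfrac1n$ and $\tfrac1{n+1}$. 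The two routes are of course close relatives (Euler's relation for homogeneous functions is the infinitesimal form of your scaling identity), but yours avoids Stokes' theorem altogether and isolates cleanly the only nontrivial normalization issues --- that the lattice-normalized $dv$ and $d\sigma$ become standard Lebesgue measures in adapted coordinates and that all facet heights equal $1$ --- which you correctly flag; in exchange, the paper's divergence-theorem identity $\int_{F_k}f\,d\sigma=\int_{P_k}(\sum_j x_jf_j+nf)\,dv$ works verbatim for arbitrary piecewise linear $f$, not just homogeneous ones, which is mildly more flexible. Your closing ``sanity check'' via $\operatorname{div}(\vec x)=n$ and $\operatorname{div}(x_i\vec x)=(n+1)x_i$ together with $\langle\vec x,\nu\rangle\,dA=d\sigma$ on each facet is essentially the paper's own proof in global form, so in effect you have given both arguments.
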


We provide the proof for the reader's convenience. 
\begin{proof}[Proof of Lemma \ref{lem:ZZ}]
Let $\mathcal F(P)=\set{F_1, \dots, F_d}$ be the set of facets of $P$.
Thus we have $\partial P=\bigcup_{k=1}^d F_k$. For $k=1,\dots, d$, let $P_k=\conv\set{0, F_k}$. 
Then $P$ can be written as the union of $P_i$: $P=\bigcup_{k=1}^d P_k$.
Let $\nu_k$ denote the outer normal vector of each facet $F_k$. Then the boundary measure $d\sigma$ on $\partial P$ defined in $\eqref{eq:BoundaryMes}$ is expressed as
$(\nu_k,\bs x)d\sigma_0$ on each facet $F_i$, where $d\sigma_0$ is the standard Lebesgue measure on $\partial P$.

Let $f$ be a rational piecewise linear function and let $f_j=\frac{\partial f}{\partial x_j}$. Using the Stoke's theorem, we see that
\begin{align*}
\int_{F_k}f\, d\sigma=\int_{\partial P_k}f\cdot(\nu_k,\bs x)\,d\sigma_0=\int_{P_k}\mathrm{div}(f\cdot \bs x)\,dv,
\end{align*}
where we used $(\nu_k,\bs x)=0$ for any $\bs x\in \partial P_k\setminus F_k$ in the first equality.
Since 
\[
\mathrm{div}(f\cdot \bs x)=\sum_{j=1}^n x_jf_j+nf, 
\]
we conclude that
\[
\int_{F_k}f\, d\sigma=\int_{P_k}\left(\sum_{j=1}^nx_jf_j+nf\right)dv.
\]
Hence, we have the equality
\begin{equation}\label{eq:IntBound}
\int_{\partial P}f\,d\sigma=\sum_{k=1}^d\int_{P_k}\left(\sum_{j=1}^nx_jf_j+nf\right)dv
\end{equation}
by taking the sum over all $k=1,\dots d$.

As a special case, we take a piecewise linear  function $f$ to be the coordinate function $x_i$, i.e., $f_i=1$ and $f_j=0$ for any $j\neq i$.
Then $\eqref{eq:IntBound}$ implies that
\[
\int_{\partial P} x_i\, d\sigma=\sum_{k=1}^d\int_{P_k}(x_i+nx_i)dv=(n+1)\int_Px_i\,dv.
\]
Similarly, if we take the constant function $f\equiv 1$, i.e., $f_j=0$ for all $1\leqslant j \leqslant d$, $\eqref{eq:IntBound}$ yields that
\[
\Vol(\partial P):=\int_{\partial P}1\, d\sigma =\sum_{k=1}^d\int_{P_k}n\, dv=n\cdot \Vol(P).
\]
The assertions are verified.
\end{proof}

\subsection{Relative K-stability}

In \cite{Don02}, Donaldson provided the condition for the K-polystability of a polarized toric manifold in terms of the positivity of a linear functional defined on $P$, which is called the {\em Donaldson-Futaki invariant}.
The Donaldson--Futaki invariant was generalized to the case of relative K-polystability in \cite{ZZ08}, and is given by
\begin{equation}\label{eq:ModifiedDF}
\mathcal L_P(f)=\int_{\partial P}f(\bs x)\, d\sigma -\int_P(\bar{S}+\theta_P(\bs x))f(\bs x)\,dv,
\end{equation}
for a convex function $f$. Here $\bar S$ is the average of the scalar curvature, $dv=dx_1\wedge \dots \wedge dx_n$ is the standard volume form on $M_\R$, and $d\sigma$ is the $(n-1)$-dimensional Lebesgue measure of $\partial P$ defined as follows:
let $\ell_j(\bs x)=\bracket{\bs x, u_j}+c_j$ be the defining affine function of the facet $F_j$
of $P$. On each facet $F_j=\Set{\bs x\in P | \ell_j(\bs x)=0}\subset \partial P$,
we define the $(n-1)$-dimensional Lebesgue measure $d\sigma_j$ by
\begin{equation}\label{eq:BoundaryMes}
dv=\pm d\sigma_j\wedge d\ell_j,
\end{equation}
up to the sign. We remark that 
\[\bar S=\frac{\Vol(\partial P)}{\Vol(P)}\]
by \cite[p.309]{Don02} and the functional
$\mathcal L_P$ corresponds to the modified Futaki invariant in \cite{Sz}.
Moreover, we recall that the potential function $\theta_P(\bs x)=\sum \alpha_ix_i+c$ of $P$ is uniquely determined by the $n+1$
equations
\begin{equation}\label{eq:n+1-eq}
\mathcal L_P(1)=0,\quad \mathcal L_P(x_i)=0 \quad \text{for}\quad i=1, \dots, n
\end{equation}
by Lemma \ref{lem:PotentialFun}.
See Section \ref{sec:algorithm} for the algorithm to compute the potential function of $P$. 
A convex function $f:P\to \R$ is said to be {\emph{rational piecewise linear}} if
$f$ is of the form
\[
f(\bs x)=\max\Set{f_1(\bs x), \dots, f_m(\bs x)}
\]
where each $f_k$ a rational affine function.
Moreover, a rational piecewise linear function $f$ is said to be {\emph{simple piecewise linear}} if it is of the form $f(\bs x)=\max\set{0,u(\bs x)}$ for a linear function $u(\bs x)$.
\begin{definition}\label{def:relK}\rm
A polarized toric manifold $(X_P, L_P)$ is called {\emph{relatively K-semistable}} (along toric degenerations) if $\mathcal L_P(f)\geqslant 0$ for any rational piecewise linear convex functions.
Moreover, it is called {\emph{relatively K-polystable}} if it is relatively K-semistable and the equality holds if and only if $f$ is affine linear.
\end{definition}
Now we consider the case where the toric manifold $X_P$ of dimension $n$ is Fano, and $L_P$ is the anti-canonical line bundle. Then we see $\bar S=n$ (cf.\cite[p.19]{Ti00}). 
By \cite[Theorem 0.1]{ZZ08} or  \cite[Theorem 1.4 (1)]{YZ19}, 
the sufficient condition of relative K-polystability for $(X_P, L_P)$ is given as
\begin{equation}\label{ineq:relK-sta}
\sup_{\bs x\in P}\theta_P(\bs x)\leqslant 1.
\end{equation}

By Atiyah-Guiilemin-Sternberg convexity theory, \cite{A82,GS82, GS84} 
$\eqref{ineq:relK-sta}$ is equivalent to the condition
\[
M_{X_P}:=\max_{\bs a\in \mathcal V(P)}\set{\theta_P(\bs a)}\leqslant 1.
\]
Here, the number $M_{X_P}$ is called the {\em Mabuchi constant} of the toric Fano variety $X_P$. Moreover, Yao proved the following. See also \cite[Proposition 1.2]{NSY23}.
\begin{proposition}[\cite{Yao22a}]\label{MabuchiTest}
    $X_P$ is relatively Ding unstable iff  $M_{X_P} > 1.$
\end{proposition}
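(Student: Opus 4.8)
The strategy is to reduce the statement to the sufficient condition for relative K-polystability already recorded in \eqref{ineq:relK-sta} together with a matching \emph{necessary} direction coming from a carefully chosen destabilizing test function. For the ``if'' direction, suppose $M_{X_P}>1$. By the Atiyah--Guillemin--Sternberg convexity theorem the maximum of the affine-linear potential $\theta_P$ over $P$ is attained at a vertex, so $\sup_{\bs x\in P}\theta_P(\bs x)=M_{X_P}>1$. I would then produce an explicit rational piecewise linear convex function $f$ on which the modified Donaldson--Futaki functional $\mathcal L_P(f)$ is negative. The natural candidate, following the toric Ding framework of \cite{Yao22a}, is a simple piecewise linear function $f(\bs x)=\max\{0,u(\bs x)\}$ whose ``kink'' cuts off a neighborhood of the vertex $\bs a\in\mathcal V(P)$ at which $\theta_P(\bs a)=M_{X_P}$; equivalently one tests against a family of such functions approaching the delta-type functional supported at $\bs a$. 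Because $\theta_P$ is affine linear, $\bar S=n$, and $\int_P\theta_P\,dv=0$, a short computation using Lemma \ref{lem:ZZ} (specifically $\int_{\partial P}x_i\,d\sigma=(n+1)\int_P x_i\,dv$ and $\Vol(\partial P)=n\Vol(P)$) shows that the leading behavior of $\mathcal L_P(f)$ as the cut shrinks to $\bs a$ is governed by the sign of $1-\theta_P(\bs a)$, which is negative. Hence $\mathcal L_P(f)<0$ for a suitable member of the family, so $(X_P,-K_{X_P})$ violates relative Ding semistability; this is exactly the equivalence between relative Ding stability and the sign condition on $\mathcal L_P$ established in \cite{Yao22a}.

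For the ``only if'' direction, suppose $M_{X_P}\leqslant 1$. Then $\sup_{\bs x\in P}\theta_P(\bs x)=M_{X_P}\leqslant 1$, which is precisely the hypothesis \eqref{ineq:relK-sta}. By \cite[Theorem 0.1]{ZZ08} (or \cite[Theorem 1.4(1)]{YZ19}) this implies $(X_P,-K_{X_P})$ is relatively K-polystable, hence in particular relatively Ding semistable; so $X_P$ is not relatively Ding unstable. I would phrase this half simply by citing \eqref{ineq:relK-sta} and the Ding-vs-K comparison already available in the toric setting (relative K-polystability $\Rightarrow$ relative Ding stability for Fano toric manifolds, as in \cite{Yao22a}), rather than reproving it.

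The main obstacle, and the step deserving the most care, is the explicit estimate in the ``if'' direction: choosing the right destabilizing family $f_\varepsilon=\max\{0,u_\varepsilon\}$ near the extremal vertex $\bs a$ and computing the asymptotics of $\mathcal L_{P}(f_\varepsilon)$ to isolate the dominant term proportional to $(1-\theta_P(\bs a))$. One must be careful that the normalizations of $d\sigma$ and $dv$ in \eqref{eq:BoundaryMes}, the value $\bar S=n$, and the normalization $\int_P\theta_P\,dv=0$ all enter consistently, so that the computation genuinely detects the vertex value $\theta_P(\bs a)=M_{X_P}$ rather than some averaged quantity. Everything else — the convexity theory input, the reduction of $\sup_P\theta_P$ to $\max_{\mathcal V(P)}\theta_P$, and the Ding/K comparison — is either cited or routine.
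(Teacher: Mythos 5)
The paper itself does not prove this proposition: it is quoted from \cite{Yao22a} (see also \cite[Proposition 1.2]{NSY23}), so the comparison is with Yao's argument. Measured against that, your proposal has two genuine gaps, both stemming from conflating the relative Donaldson--Futaki functional $\mathcal L_P$ of \eqref{eq:ModifiedDF} with the relative \emph{Ding} functional. In your ``if'' direction you test $\mathcal L_P$ on a corner cut-off $f_\varepsilon=\max\{0,u_\varepsilon\}$ near the vertex $\bs a$ with $\theta_P(\bs a)=M_{X_P}>1$ and claim the sign is governed by $1-\theta_P(\bs a)$. It is not: the boundary term $\int_{\partial P}f_\varepsilon\,d\sigma$ is positive of order $\varepsilon^{n}$ while the interior term $\int_P(\bar S+\theta_P)f_\varepsilon\,dv$ is only of order $\varepsilon^{n+1}$, so $\mathcal L_P(f_\varepsilon)>0$ for small cuts no matter how large $\theta_P(\bs a)$ is. Indeed, if $M_{X_P}>1$ forced $\mathcal L_P(f)<0$ for some $f$, then by Theorem \ref{eK=>relK} no toric Fano with $M_{X_P}>1$ could carry an extremal metric, contradicting $\P_{\P^2}(\mathcal O\oplus\mathcal O(2))$, and the whole point of this paper (the need for the much stronger integral criterion \eqref{ineq:insta} and for dimension $10$) would evaporate. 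Yao's criterion concerns a different functional, with no boundary integral: up to positive normalization the relative Ding invariant of the toric test configuration generated by a convex $f$ is $\int_P f(\bs x)\bigl(1-\theta_P(\bs x)\bigr)dv-\Vol(P)\,f(\bs 0)$, where $\bs 0$ is the interior lattice point of the reflexive polytope; for \emph{that} functional your local cut-off (chosen with $f(\bs 0)=0$ and support in $P^-$) does give a negative value, which is the correct proof of the ``if'' direction.

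Your ``only if'' direction is also not salvageable as written: it rests on ``relative K-polystability $\Rightarrow$ relative Ding semistability,'' which has the comparison backwards. Berman's inequality bounds the Ding invariant \emph{above} by the Donaldson--Futaki invariant, so Ding stability is the stronger notion along test configurations, and in the relative setting the implication you invoke is simply false: $\P_{\P^2}(\mathcal O\oplus\mathcal O(2))$ admits an extremal K\"ahler metric, hence is relatively K-polystable by Theorem \ref{eK=>relK}, yet it admits no Mabuchi soliton, i.e.\ it is relatively Ding unstable. The correct argument when $M_{X_P}\leqslant 1$ is direct and does not pass through K-stability: for convex $f$ take a supporting affine function at $\bs 0$, $f(\bs x)\geqslant f(\bs 0)+\langle\xi,\bs x\rangle$, and use the identities coming from \eqref{eq:n+1-eq} and Lemma \ref{lem:ZZ}, namely $\int_P x_i\bigl(1-\theta_P\bigr)dv=0$ and $\int_P\bigl(1-\theta_P\bigr)dv=\Vol(P)$, to conclude that $\int_P f(1-\theta_P)\,dv\geqslant \Vol(P)\,f(\bs 0)$ whenever $1-\theta_P\geqslant 0$ on $P$, i.e.\ the relative Ding invariant is nonnegative. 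So the two halves of your plan need to be rebuilt around the Ding functional; the convexity-theory reduction of $\sup_P\theta_P$ to the vertex maximum $M_{X_P}$ is the only part that carries over unchanged.
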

 
Condition $\eqref{ineq:relK-sta}$ has been verified for all toric del Pezzo surfaces ($5$ classes) in \cite{ZZ08}, and for some of toric Fano $3$-folds ($7$ classes out of $18$ classes) in \cite{YZ19}.

\subsection{The instability criterion of relative K-stability}
\label{sec:relKsta}
Another important contribution of the work in \cite{YZ19} is that they provided the instability criterion of relative K-polystability in terms of the associated moment polytope.
More precisely, for the given moment polytope $P$ of an anti-canonically polarized toric Fano manifold, 
we consider the polytope 
\[ 
P^-=\set{\bs x\in P| 1-\theta_P(\bs x) \leqslant 0}, 
\]
 where $\theta_P(\bs x)=\sum\alpha_ix_i+c$ is the potential function of $P$ determined by $\eqref{eq:n+1-eq}$.
Then we have the following.
\begin{proposition}[Theorem 1.4 (2) in \cite{YZ19}]\label{prop:insta}
Let $P$ be the moment polytope of a toric Fano manifold $(X, -K_X)$ with the potential function $\theta_P(\bs x)=\sum\alpha_ix_i+c$. 
Assume that $\Vol(P^-) \neq 0$. If 
\begin{equation}\label{ineq:insta}
1-c<\frac{\int_{P^-}(1-\theta_P(\bs x))^2dv}{\Vol(P^-)},
\end{equation}
then there exists a simple piecewise linear function $f(\bs x)$ such that $\mathcal L_P(f)<0$.
In particular, the corresponding toric Fano manifold is relatively K-unstable.
\end{proposition}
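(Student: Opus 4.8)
The plan is to exhibit one explicit test function on which $\mathcal L_P$ is negative. The natural candidate, already visible in the shape of \eqref{ineq:insta}, is
\[
f(\bs x)=\max\{0,\ \theta_P(\bs x)-1\}.
\]
This is a legitimate simple piecewise linear function in the sense of Definition~\ref{def:relK}: the coefficients $\alpha_i,c$ of $\theta_P$ are rational, being the solution of the linear system \eqref{eq:n+1-eq} with rational data, so $u(\bs x):=\theta_P(\bs x)-1$ is a rational affine function and $f=\max\{0,u\}$. Moreover $f$ is convex, so it belongs to the class of functions tested in Definition~\ref{def:relK}. Observe finally that $\{u>0\}=\{\theta_P>1\}$ and $P^-=\{\theta_P\ge 1\}$ differ only by a subset of the affine hyperplane $\{\theta_P=1\}$, a null set; hence every integral over $\{u>0\}$ occurring below equals the corresponding integral over $P^-$.

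The core of the argument is to rewrite the boundary term $\int_{\partial P}f\,d\sigma$ as a bulk integral by means of identity \eqref{eq:IntBound}, which is valid for any rational piecewise linear function and in particular for our $f$. On $\{u>0\}$ we have $f=u$, so its partial derivatives are those of $u$, while on $\{u<0\}$ we have $f\equiv 0$; hence $\mathrm{div}(f\cdot\bs x)=(n+1)u+(1-c)$ on $\{u>0\}$ and $\mathrm{div}(f\cdot\bs x)=0$ on $\{u<0\}$, using $u=\sum_i\alpha_ix_i+(c-1)$. Therefore
\[
\int_{\partial P}f\,d\sigma=(n+1)\int_{P^-}u\,dv+(1-c)\,\Vol(P^-).
\]
Since $\bar S=n$ for the anti-canonical polarization and $\theta_P=u+1$, one has $(\bar S+\theta_P)f=\bigl((n+1)+u\bigr)u$ on $\{u>0\}$ and $0$ elsewhere, so $\int_P(\bar S+\theta_P)f\,dv=(n+1)\int_{P^-}u\,dv+\int_{P^-}u^2\,dv$. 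Subtracting the two expressions, the terms $(n+1)\int_{P^-}u\,dv$ cancel, and using $u^2=(1-\theta_P)^2$ we obtain the clean formula
\[
\mathcal L_P(f)=(1-c)\,\Vol(P^-)-\int_{P^-}\bigl(1-\theta_P(\bs x)\bigr)^2\,dv.
\]

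Because $\Vol(P^-)\neq 0$, the hypothesis \eqref{ineq:insta} is precisely the statement $(1-c)\,\Vol(P^-)<\int_{P^-}(1-\theta_P)^2\,dv$, i.e.\ $\mathcal L_P(f)<0$. To deduce relative K-instability via Definition~\ref{def:relK} it remains to note that $f$ is not affine linear: the normalization $\int_P\theta_P\,dv=0$ from Lemma~\ref{lem:PotentialFun} gives $\int_P(1-\theta_P)\,dv=\Vol(P)>0$, so $\theta_P<1$ on a set of positive measure, whereas $\Vol(P^-)\neq 0$ forces $\theta_P>1$ on a set of positive measure; thus $u$ genuinely changes sign and $f$ is non-affine. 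Hence $(X_P,-K_{X_P})$ is not relatively K-semistable, so it is relatively K-unstable.

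I do not anticipate a serious obstacle: the argument is essentially a one-line choice of test function followed by a bookkeeping computation resting on \eqref{eq:IntBound}. The two places requiring care are (i) correctly carrying the signs and the constant $1-c$ through the two bulk integrals — in particular remembering that in $\mathrm{div}(f\cdot\bs x)$ it is the gradient of $u$, not of $\theta_P$, that enters, and that it vanishes on $\{u<0\}$ — and (ii) the final verification that $f$ is non-affine, for which the normalization of the potential function is exactly what is needed.
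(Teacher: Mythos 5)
Your proof is correct, and since the paper does not reprove this statement but simply cites it from [YZ19], the right comparison is with that source: your argument — testing $\mathcal L_P$ on $f=\max\{0,\theta_P-1\}$ and converting the boundary term via the identity \eqref{eq:IntBound} to get $\mathcal L_P(f)=(1-c)\Vol(P^-)-\int_{P^-}(1-\theta_P)^2\,dv$ — is exactly the computation behind Theorem 1.4\,(2) there. The rationality and non-affineness checks are fine (the latter is even redundant, since $\mathcal L_P$ vanishes on affine functions, so any $f$ with $\mathcal L_P(f)<0$ already violates relative K-semistability).
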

Hence, in order to find our desired examples of a relatively K-unstable toric Fano manifold, it suffices to consider toric Fano manifolds satisfying $\eqref{ineq:insta}$.

\subsection{Algorithm for computing the potential function}\label{sec:algorithm} 
Based on the description in the previous subsections, we shall present an algorithm for computing the potential function $\theta_P(\bs x)$. See \cite[Section 4]{ZZ08} and \cite[p. $496$]{YZ19} for more details.

Let $X$ be an $n$-dimensional toric Fano manifold and $P$ be the corresponding $n$-dimensional moment polytope  in $M_\R \cong \R^n$. For the standard coordinates $x_1, \dots, x_n$ of $M_\R$, let $dv=dx_1\wedge \dots \wedge dx_n$ be the volume form of $M_\R$.
Then the functional 
\[\mathcal L_P(f)  = \int_{\partial{P}} u d\sigma - \int_P (\bar{S} + \theta_P)f(\bs x) dv\]
satisfies 
\[\mathcal L_P(1) = 0 \quad \text{and} \qquad \mathcal L_P(x_i) = 0 \quad \text{for} \quad i=1, \ldots n.\]

 We would like to compute the potential function 
\[ \theta_P(\bs x) =  
\sum_{j=1}^n a_jx_j+c.\]
For $1 \leq i,j \leq n$, let 
\[ b_i := \int_P x_i dv, \qquad c_{ij} := \int_P x_i x_j dv \]
for simplicity. Since $\mathcal L_P(x_i) =0$ and 
\[ \mathcal L_P(x_i) = \int_{\partial(P)} x_i d\sigma - \int_P (\bar{S} + \theta_P) x_i dv = (n+1)b_i - ( nb_i + \sum a_jc_{ij} + cb_i )           \]
by  the fact 
$\bar{S} = \frac{\vol(\partial(P)}{\vol(P)}$ and Lemma \ref{lem:ZZ}, 
we have 
\begin{equation}\label{eq:pot1}
\sum_{j=1}^n
\Big( \Big(c_{ij} - \frac{b_ib_j}{\vol(P)}\Big)a_j \Big) = b_i
\end{equation} 
for each $1 \leq i \leq n.$ Similarly, since $\mathcal L_P(1) = 0$ and 
\[  \mathcal L_P(1) = \int_{\partial(P)}  d\sigma - \int_P (\bar{S} + \theta_P)  dv = n \vol(P) - ( n \vol(P) + \sum a_j b_j + c \vol(P) ),    \]
we have 
\begin{equation}\label{eq:pot2}
c = -\sum_{j=1}^n    
\frac{a_jb_j}{\vol(P)}.
\end{equation} 
 
In summary we can compute the potential function in three steps.

\renewcommand{\labelenumi}{\emph{Step }$\arabic{enumi}$.}
\begin{step}\rm
Compute the volume $\vol(P)$ and the integrations $b_i$'s and $c_{ij}$'s. 
\end{step}

\begin{step}\rm
Compute the coefficients $a_1, a_2, \ldots, a_n$ of the linear terms of the potential function by solving the matrix equation in \eqref{eq:pot1}.
\end{step}

\begin{step}\rm 
Compute the constant term $c$ from the equation \eqref{eq:pot2}.
\end{step}

\section{Proof of the main theorem}\label{sec:main}

\subsection{Fano polytope} We first describe our toric Fano manifold $X$ of dimension $10$ quickly. 
For more details, see Example \ref{mainex}. There, 
one can see the precise geometric construction of $X$ and a generalization. 
\begin{example}\label{10dim}\rm
Put $M_\R:=\R^{10}$ and $N_\R:=\Hom_\R(M_\R,\R)\cong\R^{10}$. 
Let $\Delta$ be the convex hull of the $18$ elements 
\[
(1,0,0,0,0,0,0,0,0,0),
(0,1,0,0,0,0,0,0,0,0), 
(0,1,0,0,0,0,0,0,0,0),
\]
\[
(0,0,0,1,0,0,0,0,0,0),
(-1,-1,-1,-1,0,0,0,0,0,0),
\]
\[
(0,0,0,0,1,0,0,0,0,0),
(0,0,0,0,0,1,0,0,0,0),
(0,0,0,0,0,0,1,0,0,0),
\]
\[
(0,0,0,0,0,0,0,1,0,0),
(0,0,0,0,-1,-1,-1,-1,0,0),
\]
\[
(0,0,0,0,0,0,0,0,1,0),
(0,0,0,0,0,0,0,0,-1,0),
\]
\[
(0,0,0,0,0,0,0,0,0,1),
(0,0,0,0,0,0,0,0,0,-1)
\]
\[
(1,0,0,0,1,0,0,0,0,0),
(1,0,0,0,1,0,0,0,1,0),
\]
\[
(0,1,0,0,0,1,0,0,0,0)\mbox{ and }
(0,1,0,0,0,1,0,0,0,1)
\]
in $N_\R$. $\Delta$ is the $10$-dimensional {\em Fano polytope} whose 
vertices are these $18$ elements. Namely, $\Delta$ contains $\boldsymbol{0}=(0,0,0,0,0,0,0,0,0,0)$ 
in its interior, and for any facet $F\subset\Delta$, the set of vertices of $F$ is a $\Z$-basis for $\mathbb{Z}^{10} \subset \mathbb{R}^{10} \cong N_\R$. 
The $10$-dimensional toric manifold $X$ of Picard number $8$ associated to 
the normal fan $\Sigma$ constructed from $\Delta$ is a Fano manifold, and this is our main target. 
We remark that there exists a sequence 
\[
X\to Y_1 \to Z_2\to Z_1\to \mathbb{P}^4\times\mathbb{P}^4\times\mathbb{P}^1\times\mathbb{P}^1
\]
of blow-ups along torus invariant submanifolds of dimension $8$ 
(the notation is due to Example \ref{mainex}). It is well-known that 
the moment polytope $P\subset M_\R$ corresponding to $X$ is the dual polytope of $\Delta\subset N_\R$. 
\end{example}

\begin{remark}
The toric Fano manifold $X$ of dimension $10$  in Example \ref{10dim} 
is a generalization of the toric Fano manifold $X_{788}$ of dimension $5$ with ID:788 according to \cite{Paffenholz}. Recall that $X_{788}$ is a two-times blow-up of $\mathbb{P}^2\times\mathbb{P}^2\times\mathbb{P}^1$ 
along torus invariant submanifolds of codimension $2$. 
\end{remark}

\subsection{The potential function}\label{sec:PotFun}
Let $P$ be the moment polytope corresponding to $X$, which is a $10$-dimensional lattice polytope with $500$ vertices.
We refer the reader Subsection \ref{sec:VerticesP} for the specific data of all vertices. 
We compute the potential function $\theta_P$ following the algorithm given in Section \ref{sec:algorithm}. Through the computer calculations using \cite{Sage}, we obtain the following results.

\scriptsize 
\[
b_0 = \frac{1160242379}{907200}, b_1 =  b_2=b_5=b_6 = \frac{74830759}{302400}, 
b_3 = b_4=b_7=b_8=\frac{-74830759}{453600}, b_9 = b_{10} = \frac{70493741}{604800}. 
\]

\tiny
\[
c_{1,1} = c_{2,2} =c_{5,5} =c_{6,6} = \frac{178450577}{207360}, c_{1,2} = c_{5,6}  = \frac{-5546027789}{26611200},\]
\[ 
c_{1,3} = c_{1,4} = c_{2,3} = c_{2,4} = c_{5,7} = c_{5,8} =c_{6,7} =c_{6,8} = -\frac{26032694389}{119750400},
c_{1,5} = c_{2,6} = -\frac{8492713417}{39916800},
c_{1,6} = c_{2,5} = \frac{14971354001}{79833600},\]
\[ 
c_{1,7} = c_{1,8} =c_{2,7}=c_{2,8}=c_{3,5}=c_{3,6}=c_{4,5}=c_{4,6} =\frac{671357611}{79833600},
c_{1,9} = c_{2,10} = c_{5,9} = c_{6,10} = -\frac{1954923461}{53222400},\]
\[ 
c_{1,10} = c_{2,9} = c_{6,9}=c_{5,10}  =\frac{401887133}{11404800}, 
c_{3,3} = c_{4,4} = c_{7,7} = c_{8,8} = \frac{11836195861}{17107200},
c_{3,4} = c_{7,8}  =  -\frac{30787982249}{239500800},\]
\[ 
c_{3,7} = c_{3,8} = c_{4,7} = c_{4,8}   = -\frac{671357611}{119750400},
c_{3,9} = c_{3,10}=c_{4,9}=c_{4,10}=c_{7,9}=c_{7,10}=c_{8,9}=c_{8,10} = \frac{238350521}{479001600},\]
\[ 
c_{9,9} = c_{10,10} = \frac{47610261247}{119750400},
c_{9,10} = \frac{2238581}{268800}. 
\]
    
\normalsize

Thus we get   
\tiny
\begin{align*}\theta_P(\bs x) &= -\frac{6652648658253133533458927983168676127683718266602824699363990525289674109591035878412}{53176342041336824655798753434693514382090025600989171593542652235046808161762115363697}x_1 \\
&+\frac{1687745798567404193815209217652629451060504388870318301921068835922145585283243903476}
{53176342041336824655798753434693514382090025600989171593542652235046808161762115363697}x_2  \\
&-\frac{21147646246417011499866967397062900688335377230874174724893114467501367151381425063936}{53176342041336824655798753434693514382090025600989171593542652235046808161762115363697}x_3  \\
&- \frac{16449189574770853901820814565647695141892141945507143699918086624224225617813524184576}{53176342041336824655798753434693514382090025600989171593542652235046808161762115363697}x_4 \\
&+ \frac{18867213846247881068138780627879434410159166376320806192304262236814428074696514423284}{53176342041336824655798753434693514382090025600989171593542652235046808161762115363697} x_5 \\
&+ \frac{18022542054726735081290778189303857192199055260809176385339483335125600021972271934964}{53176342041336824655798753434693514382090025600989171593542652235046808161762115363697} x_6 \\
&-\frac{4698456671646157598046152831415205546443235285367031024975027843277141533567900879360}{53176342041336824655798753434693514382090025600989171593542652235046808161762115363697}x_7  \\
&+ \frac{18624673824124080312696733009635412309289343301014612819864978181445157138478256822904}{53176342041336824655798753434693514382090025600989171593542652235046808161762115363697} x_9\\
&+\frac{13250624276073817145781262775194444874240614651756716947770180446185628747942770170488} {53176342041336824655798753434693514382090025600989171593542652235046808161762115363697}x_{10}\\
&
-\frac{16867374143720575167184942526540793898738108171965667017693911650767893692338734579977015850328}{61697445596558353778949801830303224262546846295462581202931147867160375877959493561648257515163}.
\end{align*}

\normalsize 

\subsection{The Mabuchi constant}\label{sec:MC}
One can compute the Mabuchi constant
\scriptsize
\begin{align*} 
M_{X_P} &= \frac{151391597288670805729207671187119501031257600642015195734257750130579621051110153773967855027680}{61697445596558353778949801830303224262546846295462581202931147867160375877959493561648257515163} \\
& \approx 2.45377415263876
\end{align*}
\normalsize
where the value is attained at the vertex $(-1, 4, -1, -1, 4, -1, -1, -1, 1, 1)$ of the moment polytope $P$. Thus, by Proposition \ref{MabuchiTest},  the toric Fano variety $X_P$ is relatively Ding unstable, hence it does not admit a Mabuchi soliton  by \cite{Yao22a}. For the higher dimensional cases, consider a product $Y$ of $X_P$ with a suitable toric Fano manifold $X$. Then, by the additive property of the Mabuchi constant of the product toric manifold (\cite[Proposition 2]{Y23}), 
$$M_Y=M_{X_P}+M_X > 1. $$
This proves Corollary \ref{noMS}.

\subsection{The instability condition}
By using \cite{Sage}, we can compute the vertices of the polytope 
\[
P^- = \set{\bs x \in P : 1- \theta_P(\bs x) < 0 }, 
\] 
whose $346$ vertices are listed in Section \ref{sec:VerticesP-}. 
For the polytope $P^-$,  we would like to verify the inequality 
$$1 - c < \frac{\int_{P^-}^{} (1-\theta_P)^2 \,dv}{\Vol(P^-)}$$
where $c$ denotes the constant term of the potential function $\theta_P$. Thus 
\scriptsize
\begin{align*} 
1-c &= \frac{78564819740278928946134744356844018161284954467428248220625059517928269570298228141625273365491}{61697445596558353778949801830303224262546846295462581202931147867160375877959493561648257515163} \\
& \approx 1.27338853303615. 
\end{align*}
\normalsize

Unfortunately, SageMath does not compute the integrations over a polytope including  rational, but not integral, vertices. Instead, we use another computer algebra system LattE (\cite{LattE}) to compute the following integrations over the rational polytope $P^-$; 
\begin{align*}
\Vol(P^-)  &\approx 27.9812402670852, \\ 
\int_{P^-}^{} (1-\theta_P)^2 \,dv & \approx 73.7005491763169. 
\end{align*}

Thus we have 
\[ 
(1-c) - \frac{\int_{P^-}^{} (1-\theta_P)^2 \,dv}{\Vol(P^-)}  \approx -1.36053864363057 < 0,
\]
which completes the proof of Theorem \ref{thm:main} by Prop \ref{prop:insta}. 
See Section \ref{sec:ExactVal} for the exact values of the above computation.

\section{Further discussions and heuristics}\label{GeneralExample}
In fact, we have checked that no toric Fano manifold of dimension at most $5$ satisfies the instability condition in Proposition \ref{prop:insta}. However, among the $866$ toric Fano manifolds of dimension $5$, ID:788, according to the database `Smooth Reflexive Lattice Polytopes' by Paffenholz (\cite{Paffenholz}), seems to be the best candidate for our purpose. This leads us to consider the following example, generalizing the one with ID:788. 
 
\begin{example}[Toric Fano manifold of dimension $5r$]\label{mainex}\rm 

Let $r\in\N$. We put $M_{\R}:=\R^{5r}$ and $N_{\mathbb{R}}:=\Hom_{\R} (M_{\R},\R)\cong\R^{5r}$ 
as usulal.  
Let $\Sigma'$ be the fan in $N_\R$ associated to the $5r$-dimensional toric manifold 
\[
Z:=\mathbb{P}^{2r}\times\mathbb{P}^{2r}\times\left(\mathbb{P}^1\right)^r. 
\]
Let $\{e_1,\ldots,e_{5r}\}$ be the standard basis for $N_\R$. Then the rays of 
$\Sigma'$ are generated by 
\[
u_1:=e_1,\ \ldots,\ u_{2r}:=e_{2r},\ u_{2r+1}:=-(e_1+\cdots+e_{2r}),\ 
\]
\[
v_1:=e_{2r+1},\ \ldots,\ v_{2r}:=e_{4r},\ v_{2r+1}:=-(e_{2r+1}+\cdots+e_{4r}),\ 
\]
\[
w_{1,1}:=e_{4r+1},\ w_{1,2}:=-e_{4r+1},\ \ldots,\ w_{r,1}:=e_{5r},\ w_{r,2}:=-e_{5r},\ 
\]
and the maximal cones of $\Sigma'$ are 
\[
\langle \{u_1,\ldots,u_{2r+1},v_1,\ldots,v_{2r+1},w_{1,1},w_{1,2},\ldots,w_{r,1},w_{r,2}\}
\setminus\{u_i,v_j,w_{1,k_1},\ldots,w_{r,k_r}\}\rangle
\]
\[
(1\le i,j\le 2r+1,1\le k_1,\ldots,k_r\le 2),
\]
where $\langle U\rangle$ stands for the convex cone generated by $U$ 
for a subset $U\subset N_\R$. 

Let $\Sigma''$ be the fan obtained from $\Sigma'$ by the 
star subdivisions along the $r$ rays spanned by 
\[
y_1:=u_1+v_1,\ldots,y_r:=u_r+v_r. 
\]
Correspondingly, we have the sequence 
\[
Y:=Z_r\to Z_{r-1} \to \cdots \to Z_1 \to Z_0:=Z
\]
of toric morphisms, where $Z_i\to Z_{i-1}$ is the blow-up along 
the torus invariant submanifold $V(\langle u_i,v_i\rangle)\subset Z_{i-1}$ 
of codimension $2$ for $1\le i\le r$. 
Here, 
$Y$ is the toric manifold associated to $\Sigma''$, while 
$V(\sigma)$ stands for 
the torus invariant submanifold associated to the cone $\sigma$ in the fan. 

Next, we construct the fan $\Sigma$ from $\Sigma''$ by the 
star subdivisions along the $r$ rays spanned by 
\[
z_1:=w_{1,1}+y_1,\ldots,z_r:=w_{r,1}+y_r. 
\]
Let $\mathfrak{X}_r$ be the toric manifold associated to $\Sigma$. 
Then we obtain the sequence 
\[
\mathfrak{X}_r=Y_r\to Y_{r-1} \to \cdots \to Y_1 \to Y_0:=Y
\]
of the associated morphisms, 
where $Y_i\to Y_{i-1}$ is the blow-up along 
the torus invariant submanifold $V(\langle w_{i,1},y_i\rangle)\subset Y_{i-1}$ 
of codimension $2$ for $1\le i\le r$. 
One can check that $\mathfrak{X}_r$ is a toric Fano manifold of dimension $5r$ and of Picard number $3r+2$. 
The vertices of the Fano polytope $\Delta_r\subset N_\R$ associated to $\mathfrak{X}_r$ are  
\[
u_1,\ldots,u_{2r+1},v_1,\ldots,v_{2r+1},w_{1,1},w_{1,2},\ldots,w_{r,1},w_{r,2},
y_1,\ldots,y_r,z_1,\ldots,z_r,
\]
while the moment polytope in $M_\R$ corresponding to $\mathfrak{X}_r$ is the dual polytope of $\Delta_r$. 
\end{example}

\begin{remark}\rm
$\mathfrak{X}_1$ is nothing but the toric Fano manifold of dimension $5$ with ID:788 according to \cite{Paffenholz}, 
while $\mathfrak{X}_2$ is $X$ in Example \ref{10dim}.
\end{remark}

\begin{remark}\label{mainremark} \rm
By easy calculations, one can confirm that for $1\le i\le r$, there exists the relation 
\[
u_{i+1}+\cdots+u_{2r+1}+y_1+\cdots+y_i=v_1+\cdots+v_i
\]
among vertices of $\Delta_r$ which corresponds to the extremal contraction 
$\varphi_{R'_i}:Z_i\to \overline{Z}_{i}$ for an extremal ray $R'_i$ of the Kleiman-Mori cone 
$\NE(Z_i)$ of $Z_i$ 
by Reid's description of toric Mori theory (see \cite{reid}). 
Let $C'_i$ be the torus invariant curve on $Z_i$ which generates $R'_i$. 
Then the intersection number of a torus invariant divisor and $C'_i$ is calculated by this relation. 
In fact, we have 
\[
(-K_{Z_i}\cdot C'_i)=2r+1-i+i-i=2r+1-i.
\]
On the other hand, for $1\le i\le r$, 
there exists the relation 
\[
u_{r+1}+\cdots+u_{2r+1}+y_{i+1}+\cdots+y_r+z_1+\cdots+z_i=v_1+\cdots+v_r+w_{1,1}+\cdots+w_{i,1}
\] 
which corresponds to the extremal contraction 
$\varphi_{R''_i}:Y_i\to \overline{Y}_{i}$ for an extremal ray $R''_i\subset \NE(Y_i)$. 
As above, let $C''_i$ be the torus invariant curve on $Y_i$ which generates $R''_i$. 
Then we have 
\[
(-K_{Y_i}\cdot C''_i)=r+1+r-i+i-(r+i)=r+1-i. 
\]
Thus there exists a decreasing sequence 
\[
2r=(-K_{Z_1}\cdot C'_1)>\cdots > (-K_{Z_r}\cdot C'_r)>(-K_{Y_1}\cdot C''_1)>\cdots >(-K_{Y_r}\cdot C''_r)=1
\]
of $2r$ intersection numbers. This phenomenon indicates that $\mathfrak{X}_r=Y_r$ is {\em extreme} 
in the following sense. 
In Example \ref{mainex}, in order to construct our toric Fano manifold $\mathfrak{X}_r$ of 
dimension $5r$, we consider $2r$ times blow-ups of $Z$, though one can easily see that 
$Z$ can be blown-up more times. 
However, if we consider $2r+2$ times blow-ups of $Z$, 
then the resulting toric manifold is no longer a Fano manifold. 
This is what {\em extreme} means. Finally, one can also easily see that 
$Z$ has to be of dimension $5r$ to make the resulting manifold Fano. 
 \end{remark}

\section{Exact values of the computation}
In this section we present all the exact values of the computation in Section \ref{sec:main}. 
\subsection{Exact values of the computation}\label{sec:ExactVal}
Firstly, the volume of \( P^- \) is given by the rational number $a/b$ with  
{\footnotesize{
\begin{align*}
a=
&99138271978918682187254686405860132746921501576849130425934448578953424436052246010539704141\\
&79865731901786274145487623327683405301083902201907019266421496734753783360159739209760594677\\
&37135734999529691461156607159712565351709639115390236785045136758382009813065689840095311829\\
&42803966189234380402618968531640886197105405095400998327290864575354333473179431516482575495\\
&65297293465575244822545238100897452336386045695274845152646872937375634066718567646601359956\\
&17092297573522328191305979730516920767841213219340253076562038593537862720122497235197521749\\
&49945378841416285229924534687619715611846126301306744777842376510969950687271182585723242217\\
&17363226455667471939938619245059504209607929427034971669550827130769566271872407090694289385\\
&92107677840177291499388934051777315585657919069595558191231986557654141227821336892308630167\\
&82988995180094923058836295512784066845039832855888354650055796167223062035725032015000526392\\
&73275886444484792561770911651089616780491276218418550094688465095482367038130108939954968429\\
&03704977314127929463677109142327774819459059045275551588702139378208066397836472030539999055\\
&47948116321461617822246798398212722515871246643951368960957822625649945137902978963518985726\\
&79767723694154625312472160648188660666297686055026836082934606831584557134144469970252158467\\
&03387928622026285545005936551061402451714949017422332612863412517347080382380203081085256012\\
&02177838050702704286830756297441257384915383133785498989886121780309015200576986359663927025\\
&58951507207992579437683347227464493481530998949399853309626260725318374154548070039148240796\\
&98422077159733594726184393236784986077604271991968251678676612144013614452934608190450073699\\
&35440834594534665034045773368348895575297698548204704447558683121709239644946488044678417141\\
&37184900370648228192666630478184356108133905995993139381305317487365286200748330034639965513\\
&01676078929643303540746661338537700372224160707159885179295632209814739622109993570103432881\\
&15952981078444559529548803988936991586368993306744366340822225958275875538176003466158214529\\
&60770843116196673732028002469595112432989269732522997901022705468191066739017151978718756022\\
&29132447694380837243917495580960791157711526072949228194447832003704789638195841517615421827\\
&49436783403823590577854995998477677803692424759045857190168687524015024749986541948337959695\\
&55905658813466755534063920101854296532746142557781103676245035972320583412158550535428355565\\
&87110724661662886857251677395475885557440671136859291630140759663691829683466998735118846678\\
&822207233582526918606204511175964161126175418773994161, \qquad \text{and} 
\end{align*}

\vspace{-00.5395cm}

\begin{align*}
b=
&35430263645438377199723001608494763235307339917885584091791446454945934579009831125636781074\\
&16559917799136192206725653590744045353123673209890856047633982174328378240282520576981794340\\
&53083259289962019153840524421445450184453607955115433785255720387227333383266258654227902390\\
&59772715082226346519224440549347146780773181452663050439407305418206899315432831254484881465\\
&90895526706566177421021449288934556676116416791724092948625874752813453042101629324962810638\\
&41791574087112458710270908312196307385088644365456536867153853197681465369462618318813438498\\
&37710928678449470028080138562654779356041818671048641800664449445374046371303805176720991334\\
&05240369911374334033968662725164217719150147770963127209718755420910336612177992730319720202\\
&45972055218452113481992006895624556286988937741943177551933141768694676849742609431834168032\\
&09126283630490871453838482544354992488413390554384605443811633366256090573946491817635936641\\
&77758878030854192561776897496714944230304478507916422441611285621946916637967040940256581751\\
&98950986326461574306285436858386243674559441168869857643085599146467138769601716202688992337\\
&83916604500771689221348779458342627630603113242081015087434215144276800162450347232799148647\\
&11832878076705939813725735634026583803786091007449577862092386311971965306438790279556028567\\
&05790431003745559485328200882671365154978814789748701131403404889606981589138655544631642327\\
&58278251485872445929391557216227373646230939477637030635767791386251488540500911233315931548\\
&84542653030076236269744548691452375705920495694839449135025071658834954648272660140238187102\\
&62233223686861363469317519053163152210126504015140011112271718925192065741806389126427461478\\
&53026759017580937482797486075769374435234924870687479595013933329926899226247395997634988599\\
&01772762179862454824428742127046294654755761399274269350473094035720200663474123691446901614\\
&12922801254230990561145176438094804786609259235057322172115931878692304230796386075652489285\\
&27462682233609593257312928174864543437819587776047187541056257455731110430713170886433893597\\
&17583183489484129656448270597746481498318248937632827569500341658216568234274425594772469923\\
&23802423808451878868335815865535871240080239056664435258272914632964553268623907099014304610\\
&55061490260534225077781791869099057042958484484660428433072480968229046836127860424301702390\\
&62160373610170311061823262861401929821484066356224879443349207773531624496510416116304438958\\
&88950877437998126445841324521723814104060123812127398395486305783233882730107876536058926978\\
&77674415293889422636962268258173529623782427197440000.
\end{align*}
}}

Secondly, the exact value of the integration 
$\int_{P^-}^{} (1-\theta_P)^2 \,dv$ is the rational number $c/d$ with
{\footnotesize{
\begin{align*}
c=
&2175188099784750065997542609888467373337434176381577121622052613057913758329843008369240683\\
&2739103685577330848169752117568087790361095209379398945098057743576903623258657539530447246\\
&0658346407205770884580727448791281579650052677873381763679043925892475851912551182415026066\\
&3605278508581596390720882115158935964420928287540416767622563808173622512939274625005731408\\
&0601337931838972973663635966305183606251642587425935205942137713244130084438519753043945123\\
&4192799824892284568186087632119008657755486906360792360112749665457482189786362294518116102\\
&4063747835459273602155396128438863435483153255941287941497107628424099124097003302430367501\\
&1907102509228886371384313052009108947542389846969473951191319637071206276667814368378162863\\
&3010771722493235843747257180580180220692508444659297662559466837903322901369585992934859166\\
&8801544728090535245499415757535782675839187070784755969803556319722645151013304140109496976\\
&9071834863656933241772138116998919833862083461096545319965452098793557970479882857432662329\\
&6413066992206045617419428126602783935162572875504192706472580048825224490470094916862757129\\
&5769483377069509846305308987948509974875931048206874335417432789337143224205400654890783713\\
&2944989068626796136144141204692693331438382290358770479076458472625495771755840816685774043\\
&9210708514810678228059494427347459486677421290361172612852052686003891889295822747746785271\\
&8425480087237487270881822891390438652628363185696172687400260839504972152461185170530279217\\
&0219053450343478939552995315493589042368315080673911343748724881121596570493210427750994944\\
&2389655840129526346061546280087697241397889679752792225535942662820060709948486107198319201\\
&5387609298297396739204167810978036099612594298984473517122999968343135738067951350454390095\\
&9812614890553435218735997220172624300552460627230470972326997682954484419127748210484237737\\
&7858037990088241982824573011296875719986421237564601601351489303777590562671956480417948612\\
&9378121857739766138407343117751500019664828002020201331641682483424013417190753883586804175\\
&4240240826794509500538799654143357871292291515954562046759737907261640002492914890632307902\\
&8211939872003101177186091620633415357759558372224807706603627145757882330369776351093212376\\
&4937787078621875578198235063911640341396860836029361968059598103506084410215807302060909401\\
&0258814702710687852244335151505366586559850685301204495569184833339305143788201533798723765\\
&4591085000477208941701387363276721756122399327242667231448006621308101327890565477809369906\\
&1304629942903725787439626637374374915093817616169349188296798243714066067385093941115040525\\
&5830647789017625985843522211547670567705221170460199311540187328677941186215647216212059675\\
&9811348679137665910968899263102154692664408579885343029232140482146344704724132774288082012\\
&9959185913788884546951527605356684526632531839273711602641642193673269004397078627237755529\\
&8954545866002436416739304069133894162522480854940010784545363340431588910058686106059143674\\
&5419788180063416443232478702198787938913648096588184843808095578139911772526600606328712316\\
&2743755903179562132047438806203228922039170363286462274925415598397953026797245894085872631\\
&9671076817892272123943715921881857468769216644629139753853797958877605176353981813542302301\\
&3676944116742664522579607728297576815481246411803027048410710740687780574611962272427048751\\
&3243431815197995413497490562274838056605193864440533787137149299406366038550714271543982754\\
&1915506434472332626988537578242377475024370923112071406761830404396180407228091489852302487\\
&4970475718554079956314198585740769502746683702455406336860440568896407705229325051093780309\\
&0065031189579082976312433881910855421567796397060617868471829934766943759628585369316378381\\
&8934874711989915935770781898568961565346898569725496311437563176245338059836059352264300638\\
&1819141780892856122210710532781119071641511583752218807346320505761271234211331586643797165\\
&3351731533169296413166803939610900247975023383384886223204020494837905467608223211706895521\\
&6756546910833780342599776619531063231222800837546010873360006654805373099175031756426704734\\
&4612425191453526865094146567192009568407807356340551228891770093236140482226414789319151766\\
&7923420299719891783717221692544436863837986183298805377563565205622527003099771766872993299\\
&2276219003643769655300646248033690421841799128311013344960774295909499548379561501995985674\\
&1019059283567284685894463019860523108034825514676319977382555166538080143283473991245816629\\
&2634336868981858990084083708462529504147874235194412956369784944526213229356255517275107309\\
&0772363917167563717569817743127521551741559692567811084606419073260085326979099129090478354\\
&5928718702652598879492430712784666195258775194055003199538350160383335693234045768923512794\\
&7144967324887353278204429178078516727413455073020228489367760936190490454328289197384932126\\
&8059781489138177115167980592532404145000754751872811649402359312807361619343784275058406046\\
&3533062433805719605465470110181207035468379808980058412453307789451907662519133430679724582\\
&3454941034721458960661473875195804341236029862354852187937160776236115588596871931293967627\\
&175507526083785422277179635487280594471949394324786010503594600613879929519, \qquad \text{and}
\end{align*}
\begin{align*}
d=
&2951386555588556778320452593030052340610113893865354368463255643382209383934658042077287583\\
&8879284755734944314812932219589012793940330978903551693541204267055511218239806393132195621\\
&8230158694883133384880426786232804129698137920390207206742745443182015181257322293111621992\\
&8693360502564218995020923248468376796334689598122879924834033657381564979285201740976680539\\
&8163029447258664534725025547233356860783821577466021370553095731723396982071631401606099010\\
&0527367613964182398432360614906739671379002744862986736119381174593101714364997658009384385\\
&7249078017278604961001659077013863395319223226909094653427225524100580188617312053038071686\\
&4191275321845188637706827627524687875348174140607408071881891963781763248612930452598182517\\
&5918177868890246477756942604878232198670711038704319850979597691851559969154082685711355617\\
&3624806095888887203266760436324517831908727532361753073264041683014335998494750784284063172\\
&3619953400034176514953431306247175530267263299885672020639746858172092806965558653195745651\\
&3538783600517386437695991645399270450110533792030301611110476756580666206669413859910153872\\
&7274454379301568937231828644565443014229152851926568506477027345778920894607452187889431350\\
&5788868314740531852062328628266550823054804307610143627637667177683450040572575777886936631\\
&8450027109784432533739482710173831126751853523727715839690798371098949363090440656019327477\\
&4867469054307654077396851207245315782412966616509514306994801145002064168463470502198734044\\
&8367639699352132913254156180334402166584577954994323675897278519304931932423443530451650193\\
&2917657647118159335263059875683951773292321446843229604298892672248365970150416906859089509\\
&4347131280961096500849842919490211730818831549642352387822881876656858459164196833722742037\\
&8889151191799703753925763153821986058842818970501879470436080546110608203836071176628946154\\
&4146635464527623614231503010578647539552079155653966971663885231786896878703302506285151678\\
&9978070798593371797190715881294368256796115847209743073085757525792591459408068000190240305\\
&5564481896126885348697507259680021305199769038125301329482276584021387025805568243269949061\\
&0681222422793727972210854906322175851078491231354559031650528709574004251976996811263856414\\
&0536788051402330606089264456724508476205512690580327819485103743360954675625586347071462758\\
&6247333351357644822188625282175659573121211401095285345824604152570049553244223861463214723\\
&3464526629363660875420308086565013306690199735371674417494694142933800686310937460808469471\\
&8759538220930935810891125487782478870916071818659088167063516768057197208454522838829911444\\
&9425737891686253831649967829610489889530099496448680567112482276717415916554200223291539960\\
&4409526632772487837888504768718724497965053705276439728469828929374863657251744373276988000\\
&3888225415306132860858691026572011222761493098283642369009600304222489475037135243414994116\\
&1454750399661948699366652691092582589701435358145417858403569482962658763757209945952472984\\
&8445171900344776306080857228169519880491623406243408311512943697584543483429154906011432870\\
&3780208721583103546322415541743817087099544101819151958661530755994214172582759021069770287\\
&2950573836992893610786289946548826188201358791066604158660704498091841914533948578738215430\\
&5400333600730557345235394822735082194523322850148193231241218631539046480336568396873746901\\
&3597109063480971777387040213082334895498547063883971882881126771574908141434590828708483199\\
&0147441441788839019473397010551250147942712317152239614061184590399388107212877592774643266\\
&2166826474812586898247668211228190666126146845147692787266692053664911866982180306094854292\\
&8546139890459414169397775905146998280559309499383335410967865743880208263265120234530557472\\
&3862148683816380501383895142879641266401455304393262975135178889294632069928317332023893062\\
&3382043682734857577090682187782949046802870478756520487263824907362150217146488790338009287\\
&6641624949639797407884151811990022838242815548100375025348641605197969240349719114151218933\\
&2902114940566370818449244877307533915080381503819923792250326084516364426997200486896707064\\
&4797451543334734446993736913466721800016175877760015510952675151227649966789681073605907784\\
&6715028844566847915505685074092880235865356559668493583127223865376226206193804255051395652\\
&8961428160756116835655861982004625475414417110535277933946188022209196574590850226278251917\\
&7530930754222182393964611074083689749418714207389620408639590461427821688148101310610475431\\
&5245364816733489483143715491969346808205810358988924567467845642844438635343189663630502594\\
&7736886442551583472403364956436549584070019978057783675388027971599669735931299923153040982\\
&1527392813314147541382691414771184323362338198831049960571122404671577799263599784854223849\\
&7112303520422167633975532082878489161129896314749034705846305359202589036968972757092485403\\
&3635460276503991892416026381166925538153635894768715811716614472063639018948692538890472993\\
&4112081421743400239500961280326497445685428137543753869898854619607007085451308927418065759\\
&9052735951623165269751933436583761977055289512897556596809166361833915242619431440013367676\\
&4161754285379411429766611840526444070220304430251421966221447417298944000.
\end{align*}
}}

\vspace{-0.3cm}

Finally, the difference $(1-c) - \frac{\int_{P^-}^{} (1-\theta_P)^2 \,dv}{\Vol(P^-)}$ is the negative rational number 
$-e/f$ with
{\footnotesize{
\begin{align*}
e=
&5617899183279073902021141981550573922694912750923867870849273791478748834140163603890873130\\
&3414633458816076989108129896537401827864884018496362904093532255671711325167685368633677013\\
&8063084448776838044345855423857373698996530973666570996430979827518546486851165243971449707\\
&3679181815993876304674204544090608351472005471293356577071895563645081758240662780998598036\\
&8123550693360417476473756039845730664018938562409556235048649079685347976255953092203426832\\
&6123297754189947277032196088625118983478888208040464825377106457457450878749801372216259836\\
&0743718069044170198260073895086356971896042209466590295520964476085909067352721453001197097\\
&2827539514069014287545121427742030662408381174655202750720955805642176102170362027606688799\\
&1254762055669808216223227979975781076512256288582997066226186934900696918388245571837438110\\
&0003754070316496914210888061335451652574358349461477360180673348245639289633552812600661233\\
&6864498161137695430079822897660001054037748625337969819018466468844920520660949108164678047\\
&5914882984534361633605687170588286393262194129010972922606342716016036467927319490842677207\\
&8370121505623330473004781207753308241298264125185993734120344232651164766884827883038607004\\
&2436610495575625494060634424731512185626155522459769788879919065284999800107334079077851567\\
&8340802112826783269504693445500273891519151405242417175746152051999705249360166799322531654\\
&0400277078999339964758145461720801498585852428190235626583792600511810213926537398656341864\\
&4878622709400218237051979385744258812656607705517728982508523338708412960830591473522652726\\
&6989267278862156893064698114945126358677574940227899379452308573879507200308105720231678043\\
&2488019584477241988491645859695179551118726081267985967193319737958926411131864600727705169\\
&1736537203021654309737334237345498111413764780064519255432660850421952904716964043643480066\\
&7063869137903255666311644244465975946545098728366473464792230907544975972768084733708077404\\
&5994854510128436928789869744318972813966479016269616828832073661051674452399362568116757005\\
&6430736401036405681003573379287612249649101553320225158202154571838505587332015326422971014\\
&8974425115443957728945025144245293291820910372820365364179685305488625450394092273656164526\\
&3856142530944166806165579939408268151898270374330696706585460070083487242204902890244436218\\
&9800924662310339716502209138047672463664891565430748127506754999774419860992199857285168072\\
&0638550043125935263823805213604846753606515972673692621476177610955762995232465815268094821\\
&7298614721692711835959373683815769216728578426277476009720616763508085864938813677981742243\\
&5723017488234443664571949543637639669658530919183998151950557780082056810160640284365377212\\
&9530574544109621240208035269508030277127412682190275281552433792454576655378246880214345122\\
&9857205964304684901319407413697992982347697843242977760561021767442836138547642562373888777\\
&3279897475752069149572153365771515415793394945035126969489203160108800205011504377938586133\\
&9404804021525627434203094579988317861446564436888436890977349045922421454105776990866537815\\
&8406606338945211515805521459489950758808337803307799891712922137451243371148928207207162588\\
&9468401125385450710452547535783109776822806053795345348440410947494530819561812600774824976\\
&4261814448031734417916600776319926475205344530961924821151044590431614311645613260210049700\\
&3899466842988173234131095263393106759597468821326322308261321649575834187452614704632013470\\
&9672704961088290662851798844867031330858923443487470877584270724566169795407436900898873018\\
&3663867809897099845628250162709993169617134576722163321371433198586135485531135168501671791\\
&7842895072009009029219631938794022398187735480830761690168174832449136610711189955655905907\\
&7968007443625425120443392030751648931380572945091438510930817724450886635923747749991688579\\
&2319537027226149221430389625829257144616344168986736636755727109876696936564147093320149075\\
&1467952042777810402739382411194931464329674736981555422045872539046035703871633280962858562\\
&2278638510642392090143974585223278644336452795503666334538921789155522556119227512620507050\\
&9630480502453092285145633230838128972190046440525718828528470955505453013106876897830288781\\
&1180831008238532272740653633815894413076072425139914965126860485191722742844620846930783347\\
&6111514256975936894161972881681210712538021761894298516944245715677430426107681783860746533\\
&8447739521734944351333198190471830761063716566822839515344257836128194391291787881026589855\\
&6726201537016543959796900955136177209928003572180356216542899845592337415236147441714406653\\
&1762852435038277003953754372259337272377488586952742697254484386137399607269747177071605233\\
&0666955812916740609616784902965936533022638745441029075203862996903419564833734492295046181\\
&1448270486919159436793972858752557869649118023443867196585290814941231868328356859895234936\\
&3522015016915966334374116074470370925420506076131958792975922896959131785394815879116853099\\
&1432218228186451047511082306692757507007270103635523829251786832170109580171393699661065305\\
&0333685401142299764988301585267413150124174017804757159402170230352810832222124342693441887\\
&454342023322502098355141866530185184365590769838640246828245519786892051819, \qquad \text{and}
\end{align*}
\begin{align*}
f=
&4129172816648427646450290758485649112694633354612342928403934929043458145692389015211049235\\
&0767303500473814367585472570363925508195042900623756823962511521554984495717062457480752134\\
&2251164297880321776334267896094081772933051488421303967790129974937061603255420562291063470\\
&5377600401744043011633693433081375385106450995359768493762628645555528794479596040868806661\\
&6573812790587805155851357154834151532140532240751859368523462379761760984612683718434370143\\
&1156742231638448290219944428863092423623566854747175067565788062765866819863533691152751378\\
&1393462072824230485090978787527343650580587417857659358721754627982938416797920043418040622\\
&5712553599722589795005668393326145943975193690971663725775516014433412737860052878980542548\\
&2065728736220099606380447687915196637413939101879267856823858443644921440568998209530626384\\
&7010371143341566732966227038766817180014344076976481574002617792306940012602017428284596939\\
&9064813447352266278562866452626575444762392809803431137995448301945579565378701328310788118\\
&0014245365753874871233643092518946968022141077063576720761342958304534321391729332336424361\\
&9859924901953042767332990510099924533245811931096057362540613203239372209399958566615765356\\
&8481368897227577601696319968829357321649422427449397897874350166598763246531035353567667778\\
&9717527165699283761075812677298172104437338815965195240599589468373188603718266870008646722\\
&0762367035580924880549291924389301249982283608054762541129532585455891076366557453056065906\\
&0893963420724257269450774619732628296436538970036116727468716591721164385872277274116089507\\
&9007354178242659573453816713437601446146686456687700275810584480421101467238296390313906311\\
&4921803858665215815065872219130936181909933532259558094602030634482593995171050652231984312\\
&9620085227310537319601171297498726233914020462550664673397681760614517266453301670276853311\\
&9311981531251476135954714678651180434002110266555240034122097297216738813931575007587881801\\
&6000829111856222401763584301512814313741111903098971985684197430598916893239228217357016270\\
&8693816078262700121073827540062481114753875176114056065213575036868356124388086302718716568\\
&1899090012360950520248317807300502766415274953481790692635204976154262486814870457932608783\\
&3320418708979706891474468607125451665163800589239238067817786082763526050531842634565919075\\
&6003957804025362498583665128544906520625727433036827997513014824361416243933737766497551738\\
&1563405122963912683722101365785734545330839423632826492685152093007955877109434098607524546\\
&5493258099140526279641564311919208804212018361765647196894728247704797631593076528357281913\\
&2939164762138273145542656771150609506785595548056806570810537692917624960180263484688304532\\
&4287096733273641442138749565952867552489133169774344906343265806872806694883034941474728707\\
&3844196769029017998063971825080420668440837410300857803674693863397652922110239961197663670\\
&7129042684261881522535064163911691982767533178140029903551616636355568537476347427027844526\\
&5506875938415523226453497865309896462519275656071879589019231203073515136829817863202560617\\
&6618015016269518654838982960385664985266249609620198253189608451736427539515018157629866556\\
&6170950399599677406245890184063755423511466648203883404549292129635364925748605521740311870\\
&6213250062677237725894312948601454946143857267049373126870017198034325502833309765380649054\\
&7178715434079990180783075966299297612576967072657936294221308373566966416815085174712789254\\
&1080769131202717961622037741429816314197128992538694674341350679080724850610721173718911330\\
&8457308665184534305848544794054119743521705972594250248559037396953734723030227125423329469\\
&9007177557571403336045178704718601204128026023990530253069069441139538436805836242402444226\\
&3773469529013124918050633394068489120465513034221808672686846207832550267962341371198388588\\
&4962713092740976893110760241192931491285211109279719283048676941760771661305097156510070808\\
&2574337654606102161168107995934431478817087826145735194252856876671309010908556489575030712\\
&7039485329042148097541509405923739955052558260235788207240591136853853735530529834185577864\\
&2149946301901619179144550723577337417214134760858868278297115431939018034092559892978300545\\
&1857466851321812465280484590581395553893526791165410858234059256934375572450480655953064974\\
&3581210367996633368833733161015205127036256618665451712854817515154419368026809411184009077\\
&5667786493227763984164067668839681501958531668234322816759440190750288097814638919266289238\\
&6919239314263354796158587614069627121381071296523036733971970621092778074253719267663418689\\
&6862727481288801728149646787753897321653932846075573165645649273261273101386353374061563247\\
&3901365745628842927260106663827222990841789115551012502337739392668773530964234345760562175\\
&9236426280698684542223653078660647496487886776363675208509287701548420785791133254701449247\\
&5180228747517664564589732760650943028370877708393786306659326527080660936396868876014516916\\
&4855059552032557242072631016593399724773055419617663514484880868660725432629179240108509544\\
&0513687277134201486039120105311294645511615005562190841358669037986515835577904369004153511\\
&390822737919259505867013849302449393080245255791721336487667935695087239168.
\end{align*}
}} 

\subsection{The vertices of the moment polytope $P$}\label{sec:VerticesP}

}}

}}

\end{document}